\newtheorem{thm}{Theorem}
\newtheorem{prob}{Problem}
\newtheorem{lem}{Lemma}
\newtheorem{pro}{Proposition}
\theoremstyle{definition}
\def\-{\mbox{--}}
\newtheorem{obser}{Observation}
\def\pf{\noindent {\it Proof.} }
\begin{document}

\title{\Large\bf On (strong) proper vertex-connection of graphs\footnote{Supported by NSFC No.11371205 and PCSIRT.} }
\author{\small Hui Jiang, Xueliang Li, Yingying Zhang, Yan Zhao\\
\small Center for Combinatorics and LPMC-TJKLC\\
\small Nankai University, Tianjin 300071, China\\
\small E-mail: jhuink@163.com; lxl@nankai.edu.cn;\\
\small zyydlwyx@163.com; zhaoyan2010@mail.nankai.edu.cn}
\date{}
\maketitle
\begin{abstract}

A path in a vertex-colored graph is a {\it vertex-proper path} if
any two internal adjacent vertices differ in color. A vertex-colored
graph is {\it proper vertex $k$-connected} if any two vertices of
the graph are connected by $k$ disjoint vertex-proper paths of the
graph. For a $k$-connected graph $G$, the {\it proper vertex
$k$-connection number} of $G$, denoted by $pvc_{k}(G)$, is defined
as the smallest number of colors required to make $G$ proper vertex
$k$-connected. A vertex-colored graph is {\it strong proper
vertex-connected}, if for any two vertices $u,v$ of the graph, there
exists a vertex-proper $u$-$v$ geodesic. For a connected graph $G$,
the {\it strong proper vertex-connection number} of $G$, denoted by
$spvc(G)$, is the smallest number of colors required to make $G$
strong proper vertex-connected. These concepts are inspired by the
concepts of rainbow vertex $k$-connection number $rvc_k(G)$, strong
rainbow vertex-connection number $srvc(G)$, and proper
$k$-connection number $pc_k(G)$ of a $k$-connected graph $G$.
Firstly, we determine the value of $pvc(G)$ for general graphs and
$pvc_k(G)$ for some specific graphs. We also compare the values of
$pvc_k(G)$ and $pc_k(G)$. Then, sharp bounds of $spvc(G)$ are given
for a connected graph $G$ of order $n$, that is, $0\leq spvc(G)\leq
n-2$. Moreover, we characterize the graphs of order $n$ such that
$spvc(G)=n-2,n-3$, respectively. Finally, we study the relationship
among the three vertex-coloring parameters, namely, $spvc(G), \
srvc(G)$ and the chromatic number $\chi(G)$ of a connected graph
$G$.

{\flushleft\bf Keywords}: vertex-coloring, proper vertex connection,
strong proper vertex connection.

{\flushleft\bf AMS subject classification 2010}: 05C15, 05C40,
05C38, 05C75.
\end{abstract}

\section{Introduction}

In this paper, all graphs considered are simple, finite and undirected. We refer to
book \cite{B} for undefined notation and terminology in graph theory. For simplicity,
a set of internally vertex-disjoint paths will be called {\it disjoint}. A path in
an edge-colored graph is a {\it rainbow path} if its edges have different colors. An
edge-colored graph is {\it rainbow $k$-connected} if any two vertices of the graph
are connected by $k$ disjoint rainbow paths of the graph. For a $k$-connected graph
$G$, the {\it rainbow $k$-connection number} of $G$, denoted by $rc_{k}(G)$, is defined
as the smallest number of colors required to make $G$ rainbow $k$-connected. These concepts
were first introduced by Chartrand et al. in \cite{CJM, CJMZ}. Since then, a lot of
results on the rainbow connection have been obtained; see \cite{LSS, LSu}.

As a natural counterpart of the concept of rainbow $k$-connection, the concept of
rainbow vertex $k$-connection was first introduced by Krivelevich and Yuster
in \cite{KY} for $k=1$, and then by Liu et al. in \cite{LMS} for general $k$.
A path in a vertex-colored graph is a {\it vertex-rainbow path} if its internal
vertices have different colors. A vertex-colored graph is {\it rainbow vertex $k$-connected}
if any two vertices of the graph are connected by $k$ disjoint vertex-rainbow paths of the
graph. For a $k$-connected graph $G$, the {\it rainbow vertex $k$-connection number} of
$G$, denoted by $rvc_k(G)$, is defined as the smallest number of colors required to make
$G$ rainbow vertex $k$-connected. There are many results on this topic, we refer to \cite{CLS1,CLS2,LS}.

In 2011, Borozan et al. \cite{BFG} introduced the concept of proper
$k$-connection of graphs. A path in an edge-colored graph is a {\it
proper path} if any two adjacent edges differ in color. An
edge-colored graph is {\it proper $k$-connected} if any two vertices
of the graph are connected by $k$ disjoint proper paths of the
graph. For a $k$-connected graph $G$, the {\it proper $k$-connection
number} of $G$, denoted by $pc_{k}(G)$, is defined as the smallest
number of colors required to make $G$ proper $k$-connected. Note
that $$1\leq pc_k(G)\leq \min\{\chi'(G), rc_k(G)\},\ \ \ \  \ \ \ \
\ \ \ \ \ \  \ \  (1)$$ where $\chi'(G)$ denotes the edge-chromatic
number. Recently, the case for $k=1$ has been studied by Andrews et
al. \cite{ALL} and Laforge et al. \cite{LLZ}.

Inspired by the concepts above, now we introduce the concept of
proper vertex $k$-connection. A path in a vertex-colored graph is a
{\it vertex-proper path} if any two internal adjacent vertices
differ in color. A vertex-colored graph is {\it proper vertex
$k$-connected} if any two vertices of the graph are connected by $k$
disjoint vertex-proper paths of the graph. For a $k$-connected graph
$G$, the {\it proper vertex $k$-connection number} of $G$, denoted
by $pvc_{k}(G)$, is defined as the smallest number of colors
required to make $G$ proper vertex $k$-connected. We write $pvc(G)$
for $pvc_{1}(G)$, and similarly, $rc(G), rvc(G)$ and $pc(G)$ for
$rc_1(G), rvc_1(G)$ and $pc_1(G)$. Note that $$0\leq pvc_k(G)\leq
\min\{\chi(G), rvc_k(G)\},\ \ \ \  \ \ \ \ \ \ \ \ \ \  \ \  (2)$$
where $\chi(G)$ denotes the chromatic number of $G$. By Brooks'
theorem \cite{B}, if $G$ is a connected graph and is neither an odd
cycle nor a complete graph, then $\chi(G) \leq \Delta$, and so
$pvc_k(G)\leq\Delta$, where $\Delta$ denotes the maximum degree of
$G$.

\section{Proper vertex $k$-connection}

In this section, we determine the value of $pvc(G)$ for general
graphs and $pvc_k(G)$ when $G$ is a cycle, a wheel, and a complete
multipartite graph. Moreover, we show that $pc_k(G) \geq pvc_k(G)$
for $k=1$ and provide an example graph $G$ such that $pc_k(G)>
pvc_k(G)$ for $k\geq 2$.

\subsection{Proper vertex-connection number $pvc(G)$}

From the definition of $pvc(G)$, the following results are
immediate. Recall that the {\it diameter} of a connected graph $G$,
denoted by $diam(G)$, is the maximum of the distances among pairs of
vertices of $G$.

\begin{pro}\label{pro1} Let $G$ be a nontrivial connected graph. Then

$(a)$  \ $pvc(G)=0$ if and only if $G$ is a complete graph;

$(b)$ \ $pvc(G)=1$ if and only if $diam(G)=2$.
\end{pro}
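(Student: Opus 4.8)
The plan is to reduce both parts to one observation about which paths can be vertex-proper when at most one color is available. Write a path as $P = v_0 v_1 \cdots v_\ell$; since the condition defining a vertex-proper path concerns only \emph{consecutive} internal vertices, $P$ is automatically vertex-proper whenever it has at most one internal vertex, i.e.\ whenever $\ell \le 2$, no matter how the graph is colored. On the other hand, if $P$ has two or more internal vertices and the coloring uses only a single color, then the consecutive internal vertices $v_1$ and $v_2$ get the same color, so $P$ is not vertex-proper. Hence, under a coloring with one color the vertex-proper $u$-$v$ paths are exactly the $u$-$v$ paths of length at most $2$; and when no color is available one cannot color the internal vertices of a path of length at least $2$ at all, so only single edges qualify.

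For $(a)$: if $G$ is complete, then for every pair $u,v$ the edge $uv$ is a path with no internal vertex, hence trivially vertex-proper, so $G$ is proper vertex-connected using no colors and $pvc(G)=0$. Conversely, if $G$ is not complete, pick nonadjacent vertices $u,v$; every $u$-$v$ path then has an internal vertex, and any coloring realizing proper vertex-connectedness must assign that vertex a color, so $pvc(G)\ge 1$. This proves $(a)$. For $(b)$, note first that for a nontrivial connected graph $diam(G)=1$ holds exactly when $G$ is complete, so by $(a)$ we have $pvc(G)\ge 1$ if and only if $diam(G)\ge 2$. If $pvc(G)=1$, fix an optimal coloring: every pair $u,v$ is joined by a vertex-proper path, which by the opening observation has length at most $2$; hence the distance between $u$ and $v$ is at most $2$, so $diam(G)\le 2$, and combined with $pvc(G)\neq 0$ this gives $diam(G)=2$. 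Conversely, if $diam(G)=2$, color all vertices of $G$ with one color; a shortest $u$-$v$ path has length at most $2$, hence at most one internal vertex, hence is vertex-proper, so $pvc(G)\le 1$, while $pvc(G)\neq 0$ by $(a)$; thus $pvc(G)=1$.

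I do not expect a genuine obstacle here: the whole argument is elementary. The only point requiring care is the degenerate ``zero colors'' regime in part $(a)$, where one must invoke the convention that a vertex-coloring assigns a color to \emph{every} vertex of $G$ --- in particular to the internal vertices of any path used to connect two vertices --- so that a non-complete graph cannot attain $pvc(G)=0$. Once this convention is made explicit, the rest is just the bookkeeping relating the number of internal vertices on a connecting path to the distance between its endpoints, as carried out above.
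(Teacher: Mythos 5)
Your proof is correct, and it takes the same (intended) route as the paper: the paper states Proposition~\ref{pro1} as immediate from the definition, and your argument is exactly that immediate observation, namely that a path is automatically vertex-proper when it has at most one internal vertex and that a one-colored path with two or more internal vertices is never vertex-proper. The only extra content is your explicit handling of the zero-color convention in part $(a)$, which is a reasonable clarification rather than a deviation.
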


For the case that $diam(G)\geq 3$, we have the following theorem.

\begin{thm}\label{thm1} Let $G$ be a nontrivial connected graph. Then, $pvc(G)=2$ if
and only if $diam(G)\geq 3$.
\end{thm}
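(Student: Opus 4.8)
We want: $pvc(G) = 2$ iff $\mathrm{diam}(G) \geq 3$. Given Proposition \ref{pro1}, which handles $pvc(G) = 0$ (complete graphs) and $pvc(G) = 1$ ($\mathrm{diam}(G) = 2$), this is essentially a two-part argument. First the easy direction: if $\mathrm{diam}(G) \leq 2$, then $pvc(G) \leq 1 < 2$ by Proposition \ref{pro1}, so the contrapositive gives that $pvc(G) = 2$ forces $\mathrm{diam}(G) \geq 3$. Actually we also need $pvc(G) \geq 2$ when $\mathrm{diam}(G) \geq 3$: since $G$ is not complete and $\mathrm{diam}(G) \neq 2$, Proposition \ref{pro1} rules out $pvc(G) \in \{0,1\}$, hence $pvc(G) \geq 2$. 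So the whole theorem reduces to the single substantive claim: if $\mathrm{diam}(G) \geq 3$, then $pvc(G) \leq 2$, i.e. there is a 2-coloring of $V(G)$ making $G$ proper vertex-connected.

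The plan is to exhibit such a 2-coloring via a BFS-type (distance) layering. Fix a vertex $r$ and let $L_i = \{v : d(r,v) = i\}$ for $i = 0, 1, \dots, \mathrm{ecc}(r)$. Color a vertex $v$ with color $d(r,v) \bmod 2$. The key observation is that for any vertex $v \in L_i$ there is a shortest $r$-$v$ path passing through one vertex in each layer $L_0, L_1, \dots, L_i$, and along such a path consecutive internal vertices lie in consecutive layers, hence receive alternating colors $0,1,0,1,\dots$; thus every such shortest path is vertex-proper. Consequently, for any two vertices $u, v$, concatenating a vertex-proper shortest $u$-$r$ path with a vertex-proper shortest $r$-$v$ path gives a walk; one then extracts (or directly argues) a vertex-proper $u$-$v$ path. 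The care needed is at the "join" vertex $r$: since $r$ is internal in the concatenated walk, we need the two neighbors of $r$ on this walk to differ from $r$ in color — but they lie in $L_1$, so they have color $1$ while $r$ has color $0$, which is fine; more subtly we must ensure the concatenation yields a genuine path (no repeated vertices) and stays vertex-proper after any shortcutting. The cleanest route is: take $P_1$ a shortest $u$-$r$ path and $P_2$ a shortest $r$-$v$ path; let $w$ be the last vertex of $P_1$ (traversed from $u$) that also lies on $P_2$; splice $P_1$ up to $w$ with the tail of $P_2$ from $w$ to $v$. Because both pieces are subpaths of layered shortest paths, internal adjacent vertices still lie in consecutive layers and so are properly colored; one just checks the single splice vertex $w$ has both its path-neighbors in adjacent layers (indeed they are, since each piece is a shortest-path segment through consecutive layers, and $w$'s layer is common).

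The main obstacle — and the place to be careful — is verifying that the spliced path is vertex-proper at the splice vertex $w$ and that no parity clash occurs there; a priori $w$'s two neighbors on the combined path could both lie in the layer $L_{d(r,w)-1}$ or one could lie in $L_{d(r,w)+1}$, but in all cases they lie in layers of parity opposite to that of $w$, so they share a color (possibly equal to each other) that differs from $w$'s color, which is exactly what "vertex-proper" demands of the internal vertex $w$. A secondary point is the degenerate cases where $u = r$ or $v = r$ or $u,v$ lie in the same layer; these are handled by the same argument, possibly with an empty piece. Finally, note the hypothesis $\mathrm{diam}(G)\ge 3$ is used only to guarantee $pvc(G)\ge 2$ (equivalently $G$ is not complete and has diameter exceeding $2$); the construction above shows $pvc(G)\le 2$ for every connected graph, so combining the two bounds finishes the proof.
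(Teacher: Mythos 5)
Your overall strategy is the same as the paper's: both reduce the theorem to showing $pvc(G)\le 2$ via a distance layering from a root and a parity $2$-coloring, with Proposition \ref{pro1} supplying the lower bound. The difference is in where the layering lives. The paper takes a spanning tree $T$, sets $V_i=\{u: d_T(u,v)=i\}$ and colors by parity of $i$; since adjacent vertices of a tree differ in distance-to-root by exactly one, this is a proper $2$-coloring of $T$, so the \emph{unique} tree path between any two vertices is automatically simple and alternating, and no splicing is needed. You instead layer $G$ itself by $d_G(r,\cdot)$ and must assemble a vertex-proper $u$--$v$ path from two shortest paths through $r$; your check that every internal vertex of the assembled path has both neighbors in a layer of opposite parity is correct and is the right thing to verify. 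The one point that needs repair is your choice of splice vertex: you take $w$ to be the \emph{last} vertex of $P_1$ (traversed from $u$) lying on $P_2$, but that vertex is $r$ itself (as $r$ is the terminal vertex of $P_1$ and lies on $P_2$), so your splice degenerates to the full concatenation, which need not be a simple path. Taking $w$ to be the \emph{first} vertex of $P_1$ lying on $P_2$ fixes this: then the initial segment of $P_1$ up to $w$ is disjoint from $P_2$ except at $w$, the result is a genuine path, and your parity argument at $w$ goes through verbatim. With that one-word correction your proof is complete; the paper's spanning-tree formulation simply buys the simplicity of the connecting path for free, at the cost of the path no longer being a geodesic in $G$ (which is irrelevant here, since only $pvc$, not $spvc$, is at stake).
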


\pf The necessity can be verified by Proposition \ref{pro1}.

Now we prove its sufficiency. Since $diam(G)\geq3$, we have that
$pvc(G)\geq2$ and then we just need to prove that $pvc(G)\leq2$. Let
$T$ be a spanning tree of $G$. For a vertex $v\in V(T)$, let
$e_{T}(v)$ denote the eccentricity of $v$ in $T$, i.e., the maximum
of the distances between $v$ and the other vertices in $T$. Let
$V_{i}=\{u\in V(T) : d_T(u,v)=i\}$, where $0\leq i\leq e_{T}(v)$.
Hence $V_{0}=\{v\}$. Define a $2$-coloring of the vertices of $T$ as
follows: If $i$ is odd, color the vertices of $V_{i}$ with color 1;
otherwise, color the vertices of $V_{i}$ with color 2. It is easy to
check that for any two vertices $x$ and $y$ in $G$, there is a
vertex-proper path connecting them. Thus, $pvc(G)\leq2$, and
therefore, $pvc(G)=2$.\qed

\subsection{Proper vertex $k$-connection of some specific graphs}

In this subsection, we shall determine the value of $pvc_{k}(G)$ for
some specific graphs. Let $\kappa(G)=$ max\{$k:G$ is $k$-connected\}
denote the vertex-connectivity of $G$. Note that $pvc_{k}(G)$ is
well defined if and only if $1\leq k\leq\kappa(G)$. We start with
the case that $G$ is a cycle of order $n$, denoted by $C_n$. Observe
that $\kappa(C_n)=2$. We have the following results.

\begin{thm}\label{thm2} \

$(a)$ \ $pvc(C_{3})=0$, $pvc(C_{4})=pvc(C_{5})=1$, and
$pvc(C_{n})=2$ for $n\geq 6$.

$(b)$ \ $pvc_{2}(C_{3})=1$, $pvc_{2}(C_{n})=2$ for $n\geq 4$ even,
and $pvc_{2}(C_{n})=3$ for $n\geq 5$ odd.
\end{thm}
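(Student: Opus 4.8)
The plan is to handle part $(a)$ and part $(b)$ separately, in each case proving a lower bound by a counting/parity argument and a matching upper bound by an explicit coloring. Throughout, write $C_n = v_1 v_2 \cdots v_n v_1$.

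For part $(a)$: the values $pvc(C_3)=0$ and $pvc(C_4)=pvc(C_5)=1$ follow immediately from Proposition \ref{pro1}, since $C_3$ is complete and $C_4,C_5$ have diameter $2$; and for $n\geq 6$ we have $diam(C_n)\geq 3$, so Theorem \ref{thm1} gives $pvc(C_n)=2$. So part $(a)$ is essentially a corollary of what precedes it.

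For part $(b)$, the two disjoint $u$–$v$ paths in a cycle are forced: they are the two arcs of $C_n$ joining $u$ and $v$. So $pvc_2(C_n)$ is the least number of colors in a vertex-coloring such that for every pair $u,v$, \emph{both} arcs between them are vertex-proper. A short argument shows this is equivalent to demanding that every arc of length at least $2$ be vertex-proper, which in turn is equivalent to saying that the coloring restricted to the path $C_n - v_i$ (for each $i$) is a proper vertex-coloring of that path; tracking this around the cycle, the condition becomes: every two consecutive vertices along the cycle that are both ``internal'' to some arc get different colors — equivalently the cyclic sequence of colors has no two equal consecutive entries, i.e. it is a proper coloring of $C_n$ as a graph. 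Hence $pvc_2(C_n) = \chi(C_n)$ for $n\geq 4$: this gives $2$ when $n$ is even and $3$ when $n$ is odd. The small case $n=3$ is special because every arc has length $\leq 2$ so internal adjacency never occurs except trivially; a direct check gives $pvc_2(C_3)=1$ (one color works since with $3$ vertices the two arcs between any pair have one internal vertex each and no internal adjacency, but $0$ colors is impossible as $C_3$ is not... — one verifies $1$ is forced and sufficient).

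The step I expect to require the most care is pinning down the \emph{exact} equivalence between ``both arcs vertex-proper for every pair'' and ``properly colored cycle,'' and in particular getting the boundary bookkeeping right: in a vertex-proper path only \emph{internal} adjacent vertices must differ, so the endpoints of each arc are unconstrained, and one must check that the union of all these per-arc constraints really does force (and is forced by) a proper coloring of the whole cycle, with the genuine exceptions being exactly the short cases $C_3$ (and implicitly why $C_4,C_5$ already behave like the generic even/odd split). Once that reduction is clean, the lower bounds come from $\chi(C_n)$ and the upper bounds from exhibiting the alternating $2$-coloring (n even) or the standard $3$-coloring (n odd), checking in each case that every arc of length $\geq 2$ is vertex-proper.
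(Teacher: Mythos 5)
Your proposal is correct and follows essentially the same route as the paper: part $(a)$ is read off from Proposition \ref{pro1} and Theorem \ref{thm1} exactly as the paper does, and for part $(b)$ both you and the paper get the upper bound from $pvc_2(C_n)\leq\chi(C_n)$ via $(2)$ and the lower bound from the observation that a monochromatic edge $v_iv_{i+1}$ destroys the arc $v_{i-1}v_iv_{i+1}v_{i+2}$, which is one of the only two disjoint paths between $v_{i-1}$ and $v_{i+2}$. The only cosmetic difference is that you package this as the identity $pvc_2(C_n)=\chi(C_n)$ for $n\geq 4$, whereas the paper argues the two parity cases directly.
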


\pf $(a)$ \ Since $C_3=K_3$ and $diam(C_4)=diam(C_5)=2$, we have
that $pvc(C_{3})=0$ and $pvc(C_{4})=pvc(C_{5})=1$ by Proposition
\ref{pro1}. Since $diam(C_n)\geq3$ for $n\geq6$, it follows that
$pvc(C_{n})=2$ $(n\geq6)$ by Theorem \ref{thm1}.

$(b)$ \ The assertion can be easily verified for $C_3$. Now, let
$n\geq 4$. We consider two cases, depending on the parity of $n$.

Case 1. $n$ is even. By $(2)$, we have that
$pvc_2(C_n)\leq\chi(C_n)=2$. If one colors the vertices of $C_n$
with one color, then we do not have two vertex-proper paths between
any two adjacent vertices. Hence, $pvc_2(C_n)=2$ for $n\geq 4$ even.

Case 2. $n$ is odd. Similarly from $(2)$, it follows that
$pvc_2(C_n)\leq\chi(C_n)=3$. Assume that $C_n=v_1v_2\cdots v_nv_1$
$(n\geq 5)$. If we have a vertex-coloring for $C_n$ with two colors,
then there must exist two adjacent vertices, say $v_1$ and $v_2$,
colored the same. However, there do not have two vertex-proper paths
between $v_n$ and $v_3$. Thus, $pvc_2(C_n)=3$ for $n\geq 5$ odd.
\qed

A graph obtained from $C_{n}$ by joining a new vertex $v$ to every vertex of $C_{n}$ is
the {\it wheel} $W_{n}$. The vertex $v$ is the center of $W_{n}$. Note that $\kappa(W_{n})=3$.

\begin{thm}\label{thm3}\

$(a)$ \ $pvc(W_{3})=0$ and $pvc(W_{n})=1$ for $n\geq 4$.

$(b)$ \ $pvc_{2}(W_{3})=1$ and $pvc_{2}(W_{n})=pvc(C_{n})$ for
$n\geq 4$.

$(c)$ \ $pvc_{3}(W_{3})=1$ and $pvc_{3}(W_{n})=pvc_{2}(C_{n})$ for
$n\geq 4$.
\end{thm}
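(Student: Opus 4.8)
The plan is to dispose of part $(a)$ at once via Proposition \ref{pro1}, handle the degenerate wheel $W_3=K_4$ in parts $(b)$ and $(c)$ directly, and for $n\ge 4$ reduce everything to the cycle values $pvc(C_n)$ and $pvc_2(C_n)$ supplied by Theorem \ref{thm2}. For $(a)$: $W_3=K_4$ is complete, so $pvc(W_3)=0$ by Proposition \ref{pro1}$(a)$; for $n\ge 4$ the graph $W_n$ is not complete (two rim vertices that are non-adjacent on $C_n$ stay non-adjacent in $W_n$) yet has diameter $2$ (any two rim vertices are joined through the center $v$), so $pvc(W_n)=1$ by Proposition \ref{pro1}$(b)$. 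For $W_3=K_4$ in $(b)$ and $(c)$: between any two vertices one may always take the joining edge together with one, respectively two, internally disjoint paths of length $2$ through the remaining vertices; each such path is vertex-proper for the trivial reason that it has a single interior vertex, so one color suffices, while zero colors cannot work since those length-$2$ paths do carry an interior vertex. Hence $pvc_2(W_3)=pvc_3(W_3)=1$.

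The heart of the argument, valid for $n\ge 4$, is the single observation: if $u,w$ are rim vertices and $P_1,\dots,P_k$ are internally disjoint $u$-$w$ paths in $W_n$, then the center $v$, being different from both $u$ and $w$, can appear (necessarily as an interior vertex) in at most one of the $P_i$, so at least $k-1$ of them lie entirely inside $W_n-v=C_n$. For $k=2$ this means at least one of the two cycle-arcs between $u$ and $w$ must be vertex-proper; conversely such an arc together with the path $u\,v\,w$ gives two internally disjoint vertex-proper paths, and every center-to-rim pair $\{v,u\}$ is served by the edge $vu$ together with $v\,x\,u$ for a rim-neighbor $x$ of $u$. Therefore a vertex-coloring makes $W_n$ proper vertex $2$-connected if and only if its restriction to $V(C_n)$ makes $C_n$ proper vertex connected; since restricting a coloring never increases the number of colors, and an optimal coloring of $C_n$ extends to $v$ with an already-used color at no cost, we get $pvc_2(W_n)=pvc(C_n)$.

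For $k=3$ the same observation forces two of the three required $u$-$w$ paths into $C_n$, and since $C_n$ has only two internally disjoint $u$-$w$ paths, both cycle-arcs between every pair of rim vertices must then be vertex-proper; conversely the two arcs together with $u\,v\,w$ do the job, while each pair $\{v,u\}$ is served by $vu$, $v\,x\,u$ and $v\,y\,u$ with $x,y$ the two distinct rim-neighbors of $u$ (distinctness uses $n\ge 4$). As ``both arcs between every pair of rim vertices are vertex-proper'' is precisely the condition that the coloring makes $C_n$ proper vertex $2$-connected, we obtain $pvc_3(W_n)=pvc_2(C_n)$. The only genuine step is the ``at most one of the paths uses the center'' observation, which collapses the problem onto the rim cycle; I expect the main care to go into stating that reduction cleanly and into checking internal disjointness of the exhibited path systems, together with the center-to-rim pairs.
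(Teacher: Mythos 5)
Your proposal is correct and follows essentially the same route as the paper: the upper bound by extending an optimal coloring of the rim cycle to the center with an already-used color, and the lower bound via the observation that internally disjoint paths between two rim vertices can use the center at most once, forcing the remaining paths onto the cycle. You merely spell out more explicitly the center-to-rim pairs, the $W_3=K_4$ case, and part $(c)$, which the paper dispatches as ``similar.''
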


\begin{proof} $(a)$ \ Since $W_3=K_4$ and $diam(W_n)=2$ for $n\geq 4$, we have that
$pvc(W_{3})=0$ and $pvc(W_{n})=1$ $(n\geq 4)$ by Proposition \ref{pro1}.

$(b)$ \ The assertion can be easily verified for $W_{3}$. Now, let
$n\geq4$. Take a proper vertex connected coloring for the cycle
$C_{n}$ in $W_{n}$ with $pvc(C_{n})$ colors and then color the
center with any used color. Clearly, $W_{n}$ is proper vertex
2-connected. Thus, $pvc_{2}(W_{n})\leq pvc(C_{n})$. On the other
hand, consider a vertex-coloring for $W_{n}$ with fewer than
$pvc(C_{n})$ colors. Then, there exist two vertices $u,v$ in the
cycle $C_{n}$ of $W_{n}$ such that we do not have a vertex-proper
$u$-$v$ path along the cycle. Hence, there is at most one
vertex-proper $u$-$v$ path in $W_{n}$ (using the center of $W_{n}$).
Thus, $pvc_{2}(W_{n})\geq pvc(C_{n})$.

$(c)$ \ This can be proved by a similar way as the proof of Theorem
\ref{thm3}$(b)$.
\end{proof}

For the complete graph $K_n$, we have that $pvc(K_n)=0$ and
$pvc_2(K_n)=pvc_3(K_n)=\cdots = pvc_{n-1}(K_n)=1$. Let $K_{n_1,n_2}$
denote the complete bipartite graph, where $2\leq{n_1}\leq{n_2}$.
Clearly, $\kappa(K_{n_1,n_2})=n_1$. Then, we have
$pvc(K_{n_1,n_2})=1$ and $pvc_k(K_{n_1,n_2})=2$ for $2\leq k\leq
n_1$. Let $G= K_{n_1,\ldots,n_t}$ be a complete multipartite graph,
where $1 \leq n_1 \leq \cdots \leq n_t$ with $t\geq 3$ and $n_t\geq
2$. In \cite{LMS}, Liu, Mestre and Sousa determined the rainbow
vertex $k$-connection number of $K_{n_1,\ldots,n_t}$. By the same
method as the proof of Theorem 4 in \cite{LMS} and the fact that
$pvc_k(G) \leq rvc_k(G)$, we deduce the following result.

\begin{thm}\label{thm5} Let $1\leq n_{1}\leq\cdots \leq n_{t}$,
where $t\geq 3$,$n_{t}\geq 2$ and $m=\sum_{i=1}^{t-1}n_{i}$.

$(a)$ \ If $1\leq k\leq m-2$, then we have the following:

~~~    $(\romannumeral1) \ pvc_{k}(K_{n_{1},...,n_{t}})=1$ if \
$1\leq k\leq m-n_{t-1}+1$.

~~~    $(\romannumeral2) \ pvc_{k}(K_{n_{1},...,n_{t}})=2$ if \
$m-n_{t-1}+2\leq k\leq m-2$.

$(b)$ \ $(\romannumeral1) \ pvc_{m-1}(K_{n_{1},...,n_{t}})=1$ if \
$n_{t-1}\leq 2$.

~~~    $(\romannumeral2) \ pvc_{m-1}(K_{n_{1},...,n_{t}})=2$ if \
$n_{t-1}\geq 3$ and we do not have $n_{t}=n_{t-1}=n_{t-2}$ odd.

~~~    $(\romannumeral3) \ pvc_{m-1}(K_{n_{1},...,n_{t}})=3$ if \
$n_{t}=n_{t-1}=n_{t-2}\geq 3$ are odd.

$(c)$ \ $(\romannumeral1) \ pvc_{m}(K_{n_{1},...,n_{t}})=1$ if \
$n_{t-1}=1$.

~~~    $(\romannumeral2) \ pvc_{m}(K_{n_{1},...,n_{t}})=2$ if \
$2\leq n_{t-1}\leq n_{t}-2$.

~~~    $(\romannumeral3) \ pvc_{m}(K_{n_{1},...,n_{t}})=2$ if \
$n_{t-1}=n_{t}-1\geq2$ and $n_{t-2}\leq 2$, or $n_{t-1}=n_{t}\geq2$

~~~~~~~~~~and $n_{t-2}=1$.

~~~    $(\romannumeral4) \ pvc_{m}(K_{n_{1},...,n_{t}})=3$ if \
$n_{t-1}=n_{t}-1$ and $n_{t-2}\geq3$, or
$n_{t-1}=n_{t}\geq3,n_{t-2}\geq2$

~~~~~~~~~and we do not have $n_{t}=n_{t-1}=n_{t-2}=n_{t-3}=4$ and $t\geq4$.

~~~    $(\romannumeral5) \ pvc_{m}(K_{n_{1},...,n_{t}})=4$ if \
$t\geq4$ and $n_{t}=n_{t-1}=n_{t-2}=n_{t-3}=4$.

 ~~~   $(\romannumeral6) \ pvc_{m}(K_{n_{1},...,n_{t}})=s$ if \
 $n_{t}=n_{t-1}=\cdots=n_{t-s+1}=2$ and $n_{t-s}=n_{t-s-1}=\cdots=$

~~~~~~~~~~$n_{1}=1$, for $1\leq s\leq t$.

\end{thm}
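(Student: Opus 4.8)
The plan is to leverage the stated relation $pvc_k(G)\le rvc_k(G)$ (inequality~(2)) together with the structure of the argument used by Liu, Mestre and Sousa for the rainbow vertex $k$-connection number of complete multipartite graphs. Write $G=K_{n_1,\ldots,n_t}$ with parts $V_1,\ldots,V_t$, $|V_i|=n_i$, and set $m=\sum_{i=1}^{t-1}n_i$; note that $\kappa(G)=m$, so $pvc_k(G)$ is defined exactly for $1\le k\le m$. For the upper bounds, I would take an explicit optimal (or near-optimal) rainbow vertex $k$-connected coloring from the proof of Theorem~4 in \cite{LMS} and simply observe that a vertex-rainbow path is in particular a vertex-proper path, so the same coloring witnesses $pvc_k(G)\le rvc_k(G)$ for each range of $k$. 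In the several cases where the claimed value of $pvc_k$ is strictly smaller than $rvc_k$, I would instead exhibit a direct coloring with the claimed number of colors: for instance, for part~$(a)(\romannumeral1)$ the all-one coloring works because when $k\le m-n_{t-1}+1$ any two vertices can be joined by $k$ internally disjoint paths of length $\le 2$ whose single internal vertex (if any) can be routed through distinct parts, and a single internal vertex never violates the vertex-proper condition; for $(a)(\romannumeral2)$ a $2$-coloring assigning colors so that adjacent parts along any short path differ suffices.

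The lower bounds are where the real work lies, and here I would argue directly rather than quoting \cite{LMS}. The lower bound $pvc_k(G)\ge1$ holds whenever $G$ is not complete, i.e. whenever $n_{t-1}\ge2$ or $t\ge2$ with some $n_i\ge2$; the cases with $pvc_k=1$ claimed are exactly those where diam is small enough and $k$ is small enough that one color never forces a conflict. To show $pvc_k(G)\ge2$ (resp.\ $\ge3$, $\ge4$), I would suppose a coloring with fewer colors and find two vertices $u,v$ — typically chosen in the two or three largest parts, which by the pigeonhole principle must contain two (or three) vertices of the same color once the palette is too small — for which fewer than $k$ internally disjoint vertex-proper $u$-$v$ paths exist. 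The key structural observation is that in a complete multipartite graph, a $u$-$v$ path of length~$\ge2$ uses internal vertices, and once $u$ and $v$ lie in parts that together with their neighbors have been monochromatically "saturated," the only admissible paths are of length~$2$ through a common neighbor of a color different from... — more precisely, one counts how many length-$2$ paths $u$-$w$-$v$ have $c(w)\notin\{c(u),c(v)\}$ or, when $u,v$ are in the same part, simply $c(w)$ arbitrary, and compares that count to $k$. For the delicate sub-cases $(b)(\romannumeral3)$, $(c)(\romannumeral4)$–$(\romannumeral6)$, where the answer jumps to $3$ or $4$ or grows linearly, I would use a more careful disjoint-path packing argument: when $n_t=n_{t-1}=n_{t-2}$ are odd, a $2$-coloring of one such part leaves a majority color class, and joining a majority-colored vertex in $V_t$ to a majority-colored vertex in $V_{t-1}$ cannot be done with $m-1$ disjoint vertex-proper paths because too many of the would-be internal vertices share that majority color.

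For the final item $(c)(\romannumeral6)$, where all large parts have size $2$ and the answer is $s$, the argument is cleanest: with $n_t=\cdots=n_{t-s+1}=2$ and the remaining parts singletons, $m=s+(t-s)=t$ wait — rather $m=\sum_{i=1}^{t-1}n_i$, and connecting the two vertices of $V_t$ requires $m$ internally disjoint paths, each of length~$2$ through a distinct vertex of $V_1\cup\cdots\cup V_{t-1}$; a vertex-proper length-$2$ path imposes no constraint on its lone internal vertex, but the packing forces us to use, among the $s-1$ size-two parts other than $V_t$, both vertices of each — and the two vertices of one such part, lying on two of our $m$ disjoint paths that both must avoid repeating a color at... — here one shows each of the $s$ doubled parts must receive a private color, giving $pvc_m\ge s$, matched by an obvious $s$-coloring. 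I expect the main obstacle to be the bookkeeping in part~$(c)$: the boundary cases $n_{t-1}=n_t-1$ versus $n_{t-1}=n_t$, the special exclusion "$n_t=n_{t-1}=n_{t-2}=n_{t-3}=4$", and the interaction with $n_{t-2}$ and $t$ all require separate short disjoint-path-packing computations, and getting the thresholds exactly right — rather than off by one — will be the fiddly heart of the proof. Since the problem explicitly grants that the method of \cite{LMS} transfers, I would present the upper bounds by reference and devote the written proof almost entirely to these lower-bound packing arguments, organized by the six-way case split of part~$(c)$ and the three-way split of part~$(b)$.
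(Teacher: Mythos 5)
Your plan coincides with the paper's own treatment: the paper offers no written proof of this theorem beyond the single remark that it follows ``by the same method as the proof of Theorem 4 in \cite{LMS} and the fact that $pvc_k(G)\leq rvc_k(G)$,'' which is precisely your route of importing the rainbow colorings for the upper bounds and re-running the disjoint-path obstructions for the lower bounds. The one observation that would make your somewhat loose lower-bound sketches rigorous—and that explains why the transfer works at all—is that every path used in these packings has at most two internal vertices, for which the vertex-rainbow and vertex-proper conditions are identical, so the obstructions of \cite{LMS} carry over verbatim.
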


\subsection{Comparing $pc_{k}(G)$ and $pvc_{k}(G)$}

In \cite{KY}, Krivelevich and Yuster compared the values of $rc(G)$
and $rvc(G)$. They observed that one of $rc(G)$ and $rvc(G)$ cannot
be bounded in terms of the other, by providing example graphs $G$
where $rc(G)$ is much larger than $rvc(G)$, and vice versa. In
\cite{LMS}, Liu et al. compared the values of $rc_{k}(G)$ and
$rvc_{k}(G)$, similarly.

Here, we will compare the values of $pc_{k}(G)$ and $pvc_{k}(G)$.
Note that $pc(G)=1$ if and only if $G$ is a complete graph. In
addition, by Proposition \ref{pro1} we have the following assertion:
if $diam(G)=1$, then $pc(G)=1$ and $pvc(G)=0$; if $diam(G)=2$, then
$pc(G)\geq 2$ and $pvc(G)=1$; if $diam(G)\geq3$, then $pc(G)\geq 2$
and $pvc(G)=2$. Thus, we have $pc(G)\geq pvc(G)$. For $k\geq 2$, the
following theorem shows that there exists an example graph $G$ such
that $pc_{k}(G)> pvc_{k}(G)$.

\begin{thm}\label{thm5}Let $G$ be a complete bipartite graph with
classes $U$ and $V$, where $U=\{u_{1},...,u_{t}\}$ and
$V=\{v_{1},...,v_{k}\}$ $(2\leq k<t)$. Then,  $pc_{k}(G)=t$ and $pvc_{k}(G)=2$.
\end{thm}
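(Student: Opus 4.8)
The plan is to establish the two equalities separately. The value $pvc_k(G)=2$ needs no fresh argument: because $k<t$, the smaller side of the bipartition of $G=K_{t,k}$ is $V$ with $|V|=k$, so $G=K_{n_1,n_2}$ with $n_1=k$ and $n_2=t$, and since $2\le k=n_1$ this is precisely the case $pvc_k(K_{n_1,n_2})=2$ (for $2\le k\le n_1$) recorded just above. (A direct check also works: $\chi(G)=2$ gives $pvc_k(G)\le 2$ by $(2)$, while a single colour cannot produce two vertex-proper $u_1$-$v_1$ paths, since the edge $u_1v_1$ is one of them and every other $u_1$-$v_1$ path has length at least $3$, hence two adjacent monochromatic internal vertices.)

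For the bound $pc_k(G)\le t$, note that $G$ is bipartite with $\Delta(G)=t$, so $\chi'(G)=t$ by K\"onig's edge-colouring theorem, whence $pc_k(G)\le\chi'(G)=t$ by $(1)$; explicitly, any proper edge-colouring of $G$ with $t$ colours turns every path into a proper path, and since $\kappa(G)=k$ the $k$-connected graph $G$ then joins any two vertices by $k$ internally disjoint (automatically proper) paths.

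The substance is the reverse inequality $pc_k(G)\ge t$. Fix any colouring making $G$ proper $k$-connected and choose two distinct vertices $u_i,u_j\in U$. Let $P_1,\dots,P_k$ be $k$ internally disjoint proper $u_i$-$u_j$ paths. Since $G$ is bipartite with parts $U$ and $V$ and both endpoints lie in $U$, on each $P_m$ the neighbour of $u_i$ and the neighbour of $u_j$ lie in $V$, so $P_m$ contains at least one vertex of $V$; as the $P_m$ are internally disjoint and $|V|=k$, each $P_m$ contains exactly one vertex of $V$, and a $u_i$-$u_j$ path meeting $V$ in the single vertex $v_\ell$ must be the length-two path $u_iv_\ell u_j$. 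Hence $\{P_1,\dots,P_k\}=\{\,u_iv_\ell u_j : \ell=1,\dots,k\,\}$, and properness of each of these paths gives $c(u_iv_\ell)\ne c(u_jv_\ell)$ for every $\ell\in\{1,\dots,k\}$. Letting $i,j$ range over all pairs in $\{1,\dots,t\}$, for each fixed $\ell$ the $t$ edges $u_1v_\ell,\dots,u_tv_\ell$ receive pairwise distinct colours, so at least $t$ colours are used and $pc_k(G)\ge t$. The one step requiring care is this collapse of every system of $k$ disjoint $u_i$-$u_j$ paths onto the $k$ paths of length two; it is exactly here that the hypothesis $k<t$ (equivalently $|V|=k=\kappa(G)$) is used, after which the colour count is immediate.
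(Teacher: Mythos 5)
Your proof is correct and takes essentially the same approach as the paper: both the upper bound via $pc_k(G)\le\chi'(G)=\Delta=t$ and the lower bound via the observation that $k$ internally disjoint $u_i$\mbox{-}$u_j$ paths must be exactly the $k$ length-two paths through $V$, forcing the $t$ edges at each $v_\ell$ to be rainbow. The paper phrases the lower bound contrapositively (fewer than $t$ colors forces two equally colored edges at $v_1$, destroying one required path), whereas you argue it directly and spell out the path-collapse step that the paper leaves implicit; the content is the same.
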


\begin{proof} Clearly, $pvc_{k}(G)=2$. Next we just need to prove $pc_{k}(G)=t$.
Since $G$ is a complete bipartite graph, it follows that
$\chi'(G)=\Delta$. Moreover, $pc_{k}(G)\leq \chi'(G)$ in $(1)$ and
$\Delta=t$. Then, $pc_{k}(G)\leq t$. If one colors the edges of $G$
with fewer than $t$ colors, then there exist two edges of
$\{v_1u_{i}:u_{i}\in U \}$, say $v_1u_1$ and $v_1u_2$, colored the
same. Thus, we can not have $k$ disjoint proper paths between
$u_{1}$ and $u_{2}$. Therefore, $pc_{k}(G)=t$.
\end{proof}

We observe that $pc_k(G)\geq pvc_k(G)$ for $k=1$. Moreover from
Theorem \ref{thm5}, we find an example such that $pc_k(G)> pvc_k(G)$
for $k\geq 2$. Note that $pc_{2}(G)=pvc_{2}(G)$ if $G$ is a cycle of
order $n\geq 4$. However, we cannot show whether there exists a
graph $G$ such that $pc_k(G)< pvc_k(G)$. Thus, we pose the following
problem.

\begin{prob} Let $k\geq2$. Does it hold that $pc_k(G)\geq pvc_k(G)$ for
any connected graph $G$?
\end{prob}

\section{Strong proper vertex-connection}

In \cite{LMS1}, Li et al. introduced the concept of strong rainbow
vertex-connection. A vertex-colored graph is {\it strong rainbow
vertex-connected}, if for any two vertices $u, v$ of the graph,
there exists a vertex-rainbow $u$-$v$ geodesic, i.e., a $u$-$v$ path
of length $d(u,v)$). For a connected graph $G$, the {\it strong
rainbow vertex-connection number} of $G$, denoted by $srvc(G)$, is
the smallest number of colors required to make $G$ strong rainbow
vertex-connected.

A natural idea is to introduce the concept of the strong proper
vertex-connection. A vertex-colored graph is {\it strong proper
vertex-connected}, if for any two vertices $u, v$ of the graph,
there exists a vertex-proper $u$-$v$ geodesic. For a connected graph
$G$, the {\it strong proper vertex-connection number} of $G$,
denoted by $spvc(G)$, is the smallest number of colors required to
make $G$ strong proper vertex-connected. Note that if $G$ is a
nontrivial connected graph, then $$0\leq pvc(G)\leq spvc(G)\leq
\min\{\chi(G), srvc(G)\}.\ \ \ \  \ \ \ \ \ \ \ \ \ \  \ \  (3)$$
The following results on $spvc(G)$ are immediate from definition.

\begin{pro}\label{pro2} Let $G$ be a nontrivial connected graph of order $n$. Then

$(a)$ \ $spvc(G)=0$ if and only if $G$ is a complete graph;

$(b)$ \ $spvc(G)=1$ if and only if $diam(G)=2$.

\end{pro}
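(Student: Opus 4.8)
The plan is to mirror the structure of the proof of Proposition \ref{pro1}, since the strong proper vertex-connection number behaves almost exactly like $pvc(G)$ at the low end of the spectrum. The key observation is that for vertices at distance $1$ or $2$, the geodesic joining them has at most one internal vertex, so the vertex-proper condition on geodesics is automatically satisfied regardless of the coloring; the only constraint comes from the eccentricity structure of $G$.

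For part $(a)$: if $G$ is complete, then $d(u,v)=1$ for every pair $u,v$, so the edge $uv$ is itself a vertex-proper $u$-$v$ geodesic (it has no internal vertices), and $0$ colors suffice; hence $spvc(G)=0$. Conversely, by $(3)$ we have $pvc(G)\leq spvc(G)$, so $spvc(G)=0$ forces $pvc(G)=0$, and Proposition \ref{pro1}$(a)$ then gives that $G$ is complete. For part $(b)$: first, $spvc(G)=1$ implies $G$ is not complete (by part $(a)$), so $diam(G)\geq 2$; it remains to rule out $diam(G)\geq 3$. If $spvc(G)=1$, then again $pvc(G)\leq spvc(G)=1$, and combined with $pvc(G)\geq 1$ (as $G$ is not complete) we get $pvc(G)=1$, so $diam(G)=2$ by Proposition \ref{pro1}$(b)$. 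For the reverse implication, suppose $diam(G)=2$. Then $G$ is not complete, so $spvc(G)\geq 1$ by part $(a)$. On the other hand, color every vertex of $G$ with the single color $1$. For any two vertices $u,v$, a $u$-$v$ geodesic has length $1$ or $2$, hence at most one internal vertex, so it is trivially a vertex-proper path; thus $G$ is strong proper vertex-connected with one color and $spvc(G)\leq 1$. Therefore $spvc(G)=1$.

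I expect no real obstacle here: the whole argument reduces to the trivial fact that a path with at most one internal vertex is always vertex-proper, together with the sandwich inequality $pvc(G)\leq spvc(G)$ from $(3)$ and the already-established Proposition \ref{pro1}. The only point requiring a line of care is the converse direction of $(b)$ — confirming that $spvc(G)=1$ cannot hold when $diam(G)\geq 3$ — but this follows immediately by pushing the equality down to $pvc(G)$ via $(3)$ and invoking Proposition \ref{pro1}$(b)$.
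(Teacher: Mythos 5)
Your argument is correct and matches the paper's intent: the paper states Proposition \ref{pro2} without proof, calling it immediate from the definition, and your verification (geodesics of length at most $2$ are vacuously vertex-proper, plus the inequality $pvc(G)\leq spvc(G)$ from $(3)$ combined with Proposition \ref{pro1}) is exactly the routine check being omitted. Nothing is missing.
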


It is easy to obtain the following consequences.

\begin{obser}\label{obser1}\

(1) \ $spvc(P_3)=1$ and $spvc(P_n)=2$ for $n\geq 4$;

(2) \ $spvc(C_4)=spvc(C_5)=1$, $spvc(C_n)=2$ for $n\geq 6$ even, and
$spvc(C_n)=3$ for $n\geq 7$ odd;

(3) \ $spvc(K_{s,t})=1$ for $s\geq2$ and $t\geq 1$;

(4) \ $spvc(K_{n_{1},n_{2},\ldots,n_{k}})=1$ for $k\geq 3$ and
$(n_{1},n_{2},\ldots,n_{k})\neq{(1,1,\ldots,1)}$;

(5) \ $spvc(W_{n})=1$ for $n\geq4$.
\end{obser}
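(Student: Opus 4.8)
The plan is to reduce every item to Proposition \ref{pro2} after a one-line diameter computation, doing extra work only where the diameter is at least $3$. First I would dispose of all the diameter-$2$ cases. The path $P_3$ is not complete and has diameter $2$, so $spvc(P_3)=1$. The cycles $C_4,C_5$ likewise have diameter $2$; every complete bipartite graph $K_{s,t}$ with $s\ge 2$ (the star $K_{1,s}$ included) has diameter $2$ and is not complete; every complete multipartite graph on $k\ge 3$ parts that has at least one part of size $\ge 2$ is not complete and has diameter $2$, since vertices in different parts are adjacent and two vertices of a common part have a common neighbour in a third part; and $W_n$ with $n\ge 4$ is not complete and has diameter $2$. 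In each case Proposition \ref{pro2}$(b)$ gives $spvc=1$, which settles $(3)$, $(4)$, $(5)$, the $C_4,C_5$ part of $(2)$, and the $P_3$ part of $(1)$.

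It remains to treat $P_n$ for $n\ge 4$ and $C_n$ for $n\ge 6$, where $diam(G)\ge 3$, so Proposition \ref{pro2} already forces $spvc\ge 2$. For the matching upper bounds I would use that $P_n$ and every even cycle are bipartite: colour the vertices with the alternating proper $2$-colouring. A geodesic between two vertices of a path or cycle is an arc $v_iv_{i+1}\cdots v_j$, and under the alternating colouring consecutive vertices get distinct colours, so in particular any two adjacent internal vertices differ in colour; hence this geodesic is vertex-proper. Therefore $spvc(P_n)=2$ for $n\ge 4$ and $spvc(C_n)=2$ for even $n\ge 6$.

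For odd $n\ge 7$ the bound $spvc(C_n)\le 3$ follows from $(3)$, since $\chi(C_n)=3$ (a proper $3$-colouring makes every path, hence every geodesic, vertex-proper). The substantive point is $spvc(C_n)\ge 3$. Write $C_n=v_1v_2\cdots v_nv_1$ and suppose only two colours are used; as $C_n$ is an odd cycle it admits no proper $2$-colouring, so after relabelling some edge $v_1v_2$ is monochromatic. Consider the pair $v_n,v_3$: one $v_n$–$v_3$ path along the cycle is $v_nv_1v_2v_3$, of length $3$, while the other has length $n-3\ge 4$, so $v_nv_1v_2v_3$ is the \emph{unique} $v_n$–$v_3$ geodesic; its internal vertices $v_1,v_2$ share a colour, so it is not vertex-proper, and there is no vertex-proper $v_n$–$v_3$ geodesic. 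Hence two colours do not suffice and $spvc(C_n)=3$, completing $(2)$. No step here is genuinely hard; the only place that needs care is this last one, namely choosing the witness pair $v_n,v_3$ so that its geodesic is forced through the monochromatic edge, which is exactly what requires $n\ge 7$ rather than $n\ge 5$.
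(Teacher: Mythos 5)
Your proof is correct and follows exactly the route the paper intends: the paper states this observation without proof (``It is easy to obtain the following consequences''), and the expected justification is precisely Proposition \ref{pro2} for the diameter-$2$ cases, the bound $spvc(G)\le\chi(G)$ from $(3)$ together with alternating colourings for the upper bounds on paths and even cycles, and a monochromatic-edge argument (as in the proof of Theorem \ref{thm2}$(b)$) for the lower bound on odd cycles. Your observation that the witness pair $v_n,v_3$ has a \emph{unique} geodesic only when $n\ge 7$ is the one point of genuine care, and you handle it correctly.
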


In this section, sharp upper and lower bounds of $spvc(G)$ are given
for a connected graph $G$ of order $n$, that is, $0\leq spvc(G)\leq
n-2$. We also characterize the graphs of order $n$ such that
$spvc(G)=n-2, n-3$, respectively. Furthermore, we investigate the
relationship among the three vertex-coloring parameters, namely
$spvc(G), srvc(G)$ and $\chi(G)$ of a connected graph $G$.

\subsection{Bounds and characterization of extremal graphs}

The problem of finding bounds of $srvc(G)$ has been solved
completely by Li et al. \cite{LMS1}.

\begin{lem}\label{lem1}\cite{LMS1} Let $G$ be a connected graph of
order $n$ $(n\geq 3)$. Then $0\leq srvc(G)\leq n-2$. Moreover,
the bounds are sharp.
\end{lem}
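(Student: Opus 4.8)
Since Lemma~\ref{lem1} is quoted from \cite{LMS1}, we only indicate how one would argue. The lower bound $srvc(G)\ge 0$ is immediate, and one checks directly that $srvc(G)=0$ exactly when every pair of vertices is at distance at most $1$, i.e. when $G$ is complete; hence $K_n$ shows the lower bound is sharp for each $n\ge 3$. For the upper bound the governing observation is that any geodesic of $G$ has length at most $diam(G)\le n-1$, hence at most $n-2$ internal vertices, so an \emph{injective} vertex-coloring (with $n$ colors) makes every geodesic vertex-rainbow trivially; the whole task is to save two colors. If $diam(G)\le 2$ then $srvc(G)\le 1\le n-2$ and we are done, so from now on $diam(G)\ge 3$.

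The plan is to produce a coloring with $n-2$ colors that is ``almost injective'': one color class of size three and the rest singletons. The key facts that make such merges harmless are: two vertices $p,q$ cannot both be internal vertices of one common geodesic whenever $d(p,q)\ge diam(G)-1$ (such a geodesic would have length at least $d(p,q)+2\ge diam(G)+1$), and a vertex of degree $1$ is never an internal vertex of any geodesic. Fix a diametral path $v_0v_1\cdots v_d$ with $d=diam(G)$. Since $d(v_0,v_d)=d$ and $d(v_1,v_d)=d-1$, merging the colors of $v_0,v_1,v_d$ into one class creates no monochromatic pair of internal vertices arising from $\{v_0,v_d\}$ or from $\{v_1,v_d\}$; coloring the remaining $n-3$ vertices with distinct fresh colors then uses exactly $n-2$ colors. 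When $G$ has a vertex of degree $1$, one instead merges that vertex (never internal) with two arbitrary others; and when $G$ has two vertices of degree $1$, the coloring is immediate because neither is ever an internal vertex.

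The residual difficulty — the technical heart of the argument in \cite{LMS1} — is the pair $\{v_0,v_1\}$: being at distance $1$, these two vertices could a priori be simultaneously internal to some geodesic of length $\ge 3$, so one must show, by a short case analysis on the minimum degree of $G$ and the local structure at $v_0$ (e.g. whether the two neighbors of $v_0$ used by a would-be geodesic are themselves at distance $2$), that the diametral path, or the merged triple, can be chosen so that no pair of vertices is forced onto both $v_0$ and $v_1$ as internal vertices on \emph{every} one of its geodesics. Granting this, every pair of vertices of $G$ is joined by a vertex-rainbow geodesic, whence $srvc(G)\le n-2$. Finally the upper bound is sharp: in $P_n=u_1u_2\cdots u_n$ the unique $u_1$--$u_n$ geodesic is all of $P_n$, so its $n-2$ internal vertices $u_2,\dots,u_{n-1}$ must receive pairwise distinct colors, giving $srvc(P_n)\ge n-2$, with equality by coloring $u_2,\dots,u_{n-1}$ with distinct colors and $u_1,u_n$ arbitrarily. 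I expect the case analysis for the pair $\{v_0,v_1\}$ to be the only nontrivial step; everything else is counting and the length-versus-diameter inequality.
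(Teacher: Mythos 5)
The paper does not actually prove Lemma~\ref{lem1}; it is quoted verbatim from \cite{LMS1}, so there is no in-paper argument to compare yours against. Judged on its own terms, your proposal gets the easy parts right (the lower bound and its sharpness via $K_n$, the reduction to $diam(G)\geq 3$, the sharpness of $n-2$ via $P_n$, and the two key observations that a degree-one vertex is never internal to a geodesic and that two vertices $p,q$ with $d(p,q)\geq diam(G)-1$ cannot both be internal to a common geodesic). But the upper bound itself is not proved: your chosen color class $\{v_0,v_1,v_d\}$ contains the adjacent pair $\{v_0,v_1\}$, and you explicitly defer the treatment of that pair to an unspecified ``short case analysis on the minimum degree and the local structure at $v_0$.'' That case analysis is the entire content of the bound for $diam(G)\geq 3$, and as written there is no argument that the triple can always be repaired; so this is a genuine gap, not a routine omission.

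The gap is, however, self-inflicted: it disappears if you merge two \emph{disjoint pairs} instead of a triple containing an adjacent pair. Let $d=diam(G)\geq 3$ and fix a diametral path $v_0v_1\cdots v_d$; the four vertices $v_0,v_1,v_{d-1},v_d$ are distinct, and $d(v_0,v_{d-1})=d(v_1,v_d)=d-1$. Give $v_0$ and $v_{d-1}$ color $1$, give $v_1$ and $v_d$ color $2$, and give the remaining $n-4$ vertices distinct new colors, for $n-2$ colors in all. If some geodesic had two internal vertices of the same color, they would have to be $\{v_0,v_{d-1}\}$ or $\{v_1,v_d\}$, forcing that geodesic to have length at least $(d-1)+2>diam(G)$, a contradiction. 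Hence \emph{every} geodesic is vertex-rainbow and $srvc(G)\leq n-2$, with no residual cases. I would replace the triple construction by this one; the rest of your write-up can stand.
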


\begin{lem}\label{lem2}\cite{LMS1} Let $G$ be a nontrivial
connected graph of order $n$. Then $srvc(G)=n-2$ if and only
if $G=P_n$.
\end{lem}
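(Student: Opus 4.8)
The plan is to prove the two implications separately. The forward implication, $G=P_n\Rightarrow srvc(G)=n-2$, is short: Lemma~\ref{lem1} already gives $srvc(P_n)\le n-2$, so I only need the matching lower bound. Write $P_n=v_1v_2\cdots v_n$. Since a tree contains a unique path between any two of its vertices, the only $v_1$--$v_n$ geodesic is $P_n$ itself, whose internal vertices are exactly $v_2,\dots,v_{n-1}$; for this geodesic to be vertex-rainbow all $n-2$ of them must receive pairwise distinct colors, so $srvc(P_n)\ge n-2$, and hence $srvc(P_n)=n-2$.

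For the converse I would argue the contrapositive: if $G$ is a nontrivial connected graph of order $n$ with $G\ne P_n$, then $srvc(G)\le n-3$. The first step is the structural observation that every geodesic of $G$ has at most $n-3$ internal vertices. Indeed, if some $u$--$v$ geodesic $Q=w_0w_1\cdots w_\ell$ had at least $n-2$ internal vertices, then $\ell\ge n-1$, which forces $\ell=n-1$ and $V(Q)=V(G)$; thus $Q$ is a Hamiltonian path and $d_G(u,v)=n-1$. But then $G$ can contain no edge $w_iw_j$ with $j\ge i+2$, since such an edge would give the $u$--$v$ walk $w_0\cdots w_iw_j\cdots w_{n-1}$ of length $n-(j-i)\le n-2<d_G(u,v)$, a contradiction; hence $E(G)=E(Q)$ and $G=P_n$, against the hypothesis. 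In particular $diam(G)\le n-2$, and the same type of argument shows that along a shortest path no ``shortcut'' between two of its vertices can exist, so every sub-path of a geodesic is itself a geodesic of $G$.

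The second step, which carries the weight, is to produce an explicit coloring. I would fix a diametral path $P=v_1v_2\cdots v_k$ with $k-1=diam(G)$, so $k\le n-1$ by Step~1, and color its $k-2$ internal vertices $v_2,\dots,v_{k-1}$ with the distinct colors $1,\dots,k-2$; by the last remark of Step~1 this already supplies a vertex-rainbow geodesic for every pair of vertices lying on $P$. It then remains to color the $n-k+2$ remaining vertices --- the two ends $v_1,v_k$ and the $n-k$ vertices off $P$ --- using the $n-k-1$ colors not yet used together with suitable re-uses of $1,\dots,k-2$, so that every pair involving an off-path vertex or an end of $P$ is still joined by some internally-rainbow geodesic. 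Here I expect to split into the loose case $k\le n-2$, where there is a genuine surplus of colors and at least four ``free'' vertices, making the assignment easy, and the tight case $k=n-1$, where exactly one vertex $z$ lies off $P$, no color is to spare, and $z$ together with $v_1,v_k$ must re-use colors already present on $P$.

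The main obstacle is precisely the tight case $k=n-1$. There one has a near-spanning diametral path with a single extra vertex $z$, and one must analyze the possible positions of the neighbors of $z$ on $P$ (they cannot create a shortcut, by Step~1, which already constrains them), and then exhibit a re-used color for $z$ --- and for $v_1$ and $v_k$ --- that clashes with no internal vertex of any geodesic joining $z$, $v_1$ or $v_k$ to another vertex. Pinning down these adjacency patterns and checking that such a color always exists (perhaps after choosing the diametral path itself more carefully) is where the real work lies; the rest of the proof is essentially forced.
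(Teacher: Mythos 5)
This lemma is quoted in the paper from \cite{LMS1} without proof, so there is no in-paper argument to compare against; your attempt has to stand on its own. The forward implication is complete and correct: the unique $v_1$--$v_n$ geodesic in $P_n$ forces $n-2$ distinct colors on its internal vertices, and Lemma~\ref{lem1} supplies the matching upper bound.

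The converse, however, is where all the content of the lemma lies, and you have not proved it. You prove a correct structural fact (every geodesic of a connected $G\neq P_n$ has length at most $n-2$, and subpaths of geodesics are geodesics), but the coloring argument that is supposed to yield $srvc(G)\le n-3$ is only a plan, and you explicitly defer its critical part (``where the real work lies''). Worse, the one case you declare easy is not: in the case $k\le n-2$ you claim ``a genuine surplus of colors,'' but consider the spider consisting of three paths of length $\ell$ glued at a common center, with $n=3\ell+1$. A diametral path uses $k=2\ell+1$ vertices, so $k\le n-2$ for $\ell\ge 2$, yet every pair of leaves is joined by a unique geodesic, forcing all $3\ell-2=n-3$ non-leaf vertices to receive pairwise distinct colors; $srvc$ equals $n-3$ exactly and there is no surplus whatsoever. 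So the extension of the coloring from the diametral path to the rest of the graph is a genuinely global constraint-satisfaction problem, not a local greedy assignment, and both of your cases (the ``loose'' one and the ``tight'' $k=n-1$ one) still require a real argument. As it stands, the proof of the substantive direction is missing.
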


\begin{thm}\label{thm6} Let $G$ be a nontrivial connected graph
of order $n$. Then $0\leq spvc(G)\leq n-2$. Equality on the
right-hand side is attained if and only if $G\in\{P_3,P_4\}$.
\end{thm}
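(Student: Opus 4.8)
The plan is to establish the upper bound $spvc(G)\le n-2$ and then characterize equality. For the upper bound, I would invoke inequality $(3)$, namely $spvc(G)\le srvc(G)$, together with Lemma \ref{lem1}, which gives $srvc(G)\le n-2$; this immediately yields $spvc(G)\le n-2$, and the lower bound $spvc(G)\ge 0$ is trivial. So the entire content of the theorem lies in the characterization of the extremal graphs achieving $spvc(G)=n-2$.

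For the characterization, I would argue in two directions. The easy direction is to check directly that $spvc(P_3)=1=3-2$ and $spvc(P_4)=2=4-2$; both follow from Observation \ref{obser1}(1). For the hard direction, suppose $G$ is a nontrivial connected graph of order $n$ with $spvc(G)=n-2$; I must show $G\in\{P_3,P_4\}$. The first reduction is to dispose of small cases: if $n\le 2$ then $G$ is complete and $spvc(G)=0\ne n-2$ unless $n=2$ gives $0=0$, which needs to be handled by the hypothesis that we want $n-2$ large — actually for $n=2$, $G=K_2$, $spvc=0=n-2$, so I should note the theorem implicitly restricts to $n\ge 3$ (consistent with ``nontrivial'' and the listed extremal set); for $n=3$ the only connected graphs are $P_3$ and $K_3$, and $spvc(K_3)=0\ne 1$, so $G=P_3$. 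For $n\ge 4$, the key claim is that $spvc(G)\le n-3$ whenever $G\ne P_n$, and separately that $spvc(P_n)\le n-3$ for $n\ge 5$ (since $srvc(P_n)=n-2$ but a proper — not rainbow — geodesic coloring of the single path $P_n$ needs only $2$ colors by Observation \ref{obser1}(1), and $2\le n-3$ once $n\ge 5$).

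Thus the crux is the inequality $spvc(G)\le n-3$ for every connected graph $G$ of order $n\ge 5$ with $G\ne P_n$. The natural approach is to take a spanning tree $T$ of $G$ that is not a path (if $G$ itself is not a path, one can choose $T$ with a vertex of degree $\ge 3$, or handle the case where every spanning tree is a path — which forces $G$ to be a cycle or a path, and cycles are covered by Observation \ref{obser1}(2) giving $spvc(C_n)\le 3\le n-3$ for $n\ge 6$, with $C_5$ handled separately). Given such a $T$, I would root it appropriately and produce a proper-geodesic coloring using few colors: the idea is that internal vertices of geodesics can reuse colors across ``parallel'' branches, and a branching vertex lets us save at least one color compared to the path bound of $n-2$. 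Concretely, one can try a breadth-first layering argument as in the proof of Theorem \ref{thm1}, but being careful that geodesics in $G$ (not just $T$-paths) remain properly colored; alternatively, colour all but two well-chosen vertices with distinct colours and show the remaining structure still admits proper geodesics between every pair. The main obstacle I anticipate is precisely this: ensuring that the coloring works for \emph{geodesics in $G$}, since adding edges to $T$ creates shortcut paths whose internal vertices must also be properly colored, so the layering trick from Theorem \ref{thm1} does not transfer verbatim and one must exploit the existence of a high-degree vertex (or the non-path structure) to release a color while still controlling all shortest paths.
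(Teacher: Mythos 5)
Your upper bound and the easy direction (checking $P_3$ and $P_4$) match the paper exactly. The problem is the hard direction. You reduce everything to the ``key claim'' that $spvc(G)\le n-3$ for every connected non-path $G$ of order $n\ge 4$ (or $\ge 5$), but you never prove it: what you offer is a plan (choose a spanning tree with a branch vertex, reuse colors across parallel branches, handle cycles separately), and you yourself flag the central obstacle --- that the coloring must make \emph{geodesics of $G$}, not just tree paths, vertex-proper, so the layering trick from Theorem~\ref{thm1} does not transfer --- without resolving it. That obstacle is real: establishing $spvc(G)\le n-3$ in general is essentially the content of the paper's Lemma~\ref{lem3} and the case analysis in Theorem~\ref{thm7}, which requires a genuine argument split on the maximum degree. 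As written, the crux of your proof is an unproved assertion.

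The paper's own proof of the hard direction is one line and uses a tool you never invoke: Lemma~\ref{lem2}, the characterization $srvc(G)=n-2$ if and only if $G=P_n$. Since $spvc(G)\le srvc(G)\le n-2$ by $(3)$ and Lemma~\ref{lem1}, the equality $spvc(G)=n-2$ forces $srvc(G)=n-2$, hence $G=P_n$ by Lemma~\ref{lem2}; then Observation~\ref{obser1}(1) gives $spvc(P_n)=2<n-2$ for $n\ge 5$, leaving only $P_3$ and $P_4$. Replacing your structural key claim with this squeeze through $srvc$ closes the gap immediately and requires no new coloring construction.
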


\pf By $(3)$ and Lemma \ref{lem1}, it is obvious that $0\leq
spvc(G)\leq srvc(G)\leq n-2$. On one hand, we know that
$spvc(P_3)=1=n-2$ and $spvc(P_4)=2=n-2$. On the other hand, if
$spvc(G)=n-2$, then $srvc(G)=n-2$. It follows that $G\in\{P_3,P_4\}$
from Observation \ref{obser1} and Lemma \ref{lem2}.\qed

\begin{thm}\label{thm7} Let $G$ be a nontrivial connected graph
of order $n$. Then $spvc(G)=n-3$ if and only if $G$ is one of the
twelve graphs in Figure 1.
\end{thm}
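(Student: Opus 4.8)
The plan is to prove the two implications separately: sufficiency by a finite verification of the twelve graphs, and necessity by using the inequality $spvc(G)\le \chi(G)$ from $(3)$ to force $G$ to have small order. For sufficiency I would group the graphs of Figure 1 by order. The graph of order $3$ is $K_3$, so $spvc=0=n-3$ by Proposition \ref{pro2}(a); each graph of order $4$ has diameter $2$, so $spvc=1=n-3$ by Proposition \ref{pro2}(b). Each graph of order $5$ has diameter at least $3$, hence $spvc\ge 2$ by Proposition \ref{pro2}, and I would give for each an explicit $2$-coloring showing $spvc\le 2$ --- only the pairs at distance $\ge 3$ need be checked, and in each case some geodesic has its (at most two) adjacent internal vertices colored differently. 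For the graph $H$ of order $6$ (a triangle $v_1v_2v_3$ with a pendant $\ell_i$ at each $v_i$), the unique $\ell_i$-$\ell_j$ geodesic has internal vertices $v_i,v_j$, which forces $c(v_i)\ne c(v_j)$ for all $i\ne j$; so $spvc(H)\ge 3$, while coloring $v_1,v_2,v_3$ with three distinct colors and the pendants arbitrarily shows $spvc(H)\le 3$, hence $spvc(H)=3=n-3$.

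For necessity, suppose $spvc(G)=n-3$. If $n\le 4$ then $spvc(G)\le 1$ by Proposition \ref{pro2}, so $n=3$ and $G=K_3$, or $n=4$ and $diam(G)=2$, which gives one of the four listed graphs. If $n=5$ then $spvc(G)=2$, so $diam(G)\ge 3$ by Proposition \ref{pro2}; the connected graphs of order $5$ with diameter $\ge 3$ are precisely the six of order $5$ in Figure 1, each having $spvc=2$ by the sufficiency step, so $G$ is one of them. Now assume $n\ge 6$. Then $\chi(G)\ge spvc(G)=n-3$, so the excess $n-\chi(G)$ is at most $3$. Fix an optimal proper coloring of $G$; since two singleton color classes must contain adjacent vertices (otherwise merge them and lower $\chi$), the singletons form a clique $Q$ and $V(G)=Q\sqcup I_1\sqcup\cdots\sqcup I_k$ with $k\le 3$, each $I_j$ independent of size $\ge 2$, and $\sum_j(|I_j|-1)=n-\chi(G)\le 3$.

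The key step is a recoloring: give $Q$ pairwise distinct colors. Then any geodesic all of whose internal vertices lie in $Q$ is automatically vertex-proper, so the only pairs that can fail lie near the small set $S=I_1\cup\cdots\cup I_k$ (with $|S|\le 6$), which imposes only boundedly many color-inequality constraints among vertices of $S$ and their neighbors; these can be met while reusing colors of $Q$ together with at most one new color. When $k=1$ (so $|Q|=n-4$, $|I_1|=4$, and every vertex of $I_1$ has all its neighbors in $Q$) one checks directly that coloring all of $I_1$ with a single color of $Q$ already works, giving $spvc(G)\le n-4$, a contradiction. When $k\in\{2,3\}$ and $n\ge 7$, enough colors of $Q$ remain reusable that the same bookkeeping yields $spvc(G)\le n-4$, again a contradiction (the cases $\chi(G)\in\{4,5,6\}$, where $G$ is even closer to complete, are excluded the same way). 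Hence $n=6$: here the sub-case $k=1$ gives $spvc(G)\le 2$ and the sub-case $k=2$ gives $|Q|=1$ together with an explicit valid $2$-coloring, both contradictions, so $k=3$, forcing $Q=\emptyset$ and $G$ a connected subgraph of $K_{2,2,2}$ containing a triangle through all three parts; a short case analysis of such graphs leaves only $H$.

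The main obstacle is the recoloring bookkeeping for $k\in\{2,3\}$: because vertices of $S$ may be mutually adjacent and may serve as forced internal vertices of geodesics, one must determine exactly which pairs of vertices force a color inequality and verify that all such inequalities can hold simultaneously within the budget of $n-4$ colors. Together with the finite but slightly fussy order-$6$ analysis --- enumerating the connected subgraphs of $K_{2,2,2}$ of chromatic number $3$ and checking that $H$ is the only one with $spvc=3$ --- this is where essentially all of the work lies.
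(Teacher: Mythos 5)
Your sufficiency argument matches the paper's almost exactly (the twelve graphs are indeed $K_3$, the four diameter-two graphs of order $4$, the six diameter-$\ge 3$ graphs of order $5$, and the triangle with three pendants, and your lower-bound argument for the last one via the forced inequalities $c(v_i)\neq c(v_j)$ is the paper's). The problem is the necessity direction for $n\ge 6$, which is where the theorem actually lives. The paper proves this (Lemma \ref{lem3}) by cases on the maximum degree: for $\Delta\le n-4$ it quotes $spvc\le\chi\le\Delta$ via Brooks' theorem, and for $\Delta\in\{n-3,n-2,n-1\}$ it exploits the near-dominating vertex to exhibit explicit $2$- or $3$-colorings. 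You instead start from $\chi(G)\ge spvc(G)=n-3$ and analyze an optimal proper coloring, which is a legitimately different route --- but the central claim of that route, that recoloring $Q$ with distinct colors and the small set $S$ with reused colors gives $spvc(G)\le n-4$, is asserted rather than proved, and you yourself flag it as ``where essentially all of the work lies.'' That is a genuine gap, not bookkeeping: your own color count gives at most $|Q|+1=n-|S|+1$ colors, which is $\le n-4$ only when $|S|\ge 5$. In the sub-cases $|S|=4$ (e.g.\ $k=2$ with classes of sizes $2,2$, or $k=1$ with $|I_1|\in\{2,3\}$, which you do not treat --- note that $k=1$ does \emph{not} force $|I_1|=4$, since you only know $n-\chi\le 3$, not $=3$) there is no spare color at all, and vertices of $S$ can be adjacent to one another and can occur as forced internal vertices of geodesics, so one must actually prove the constraint system is satisfiable within $|Q|$ colors. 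The parenthetical dismissal of $\chi(G)\in\{n-2,n-1,n\}$ as ``excluded the same way'' has the same status.

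A second, smaller soft spot is the $n=6$ endgame: you again implicitly assume $\chi(G)=3$ (hence $k=3$, $Q=\emptyset$, $G$ a spanning subgraph of $K_{2,2,2}$), but $\chi(G)\in\{4,5,6\}$ with $spvc(G)=3$ must also be ruled out, and $\chi=3$ only forces an odd cycle, not a triangle meeting all three parts. None of these obstacles looks fatal --- graphs with $\chi\ge n-3$ are structurally constrained enough that the claims are very likely true --- but as written the proof of the hard direction is an outline, whereas the paper's $\Delta$-based casework closes every branch with an explicit coloring. If you want to salvage your route, the $|S|=4$ analysis and the $\chi>n-3$ sub-cases are the places that need honest arguments; alternatively, switching the top-level case split from $\chi$ to $\Delta$ lets Brooks' theorem absorb the generic case in one line and leaves only three concrete configurations to color by hand.
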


\begin{figure}[h,t,b,p]
\begin{center}
\scalebox{0.8}[0.8]{\includegraphics{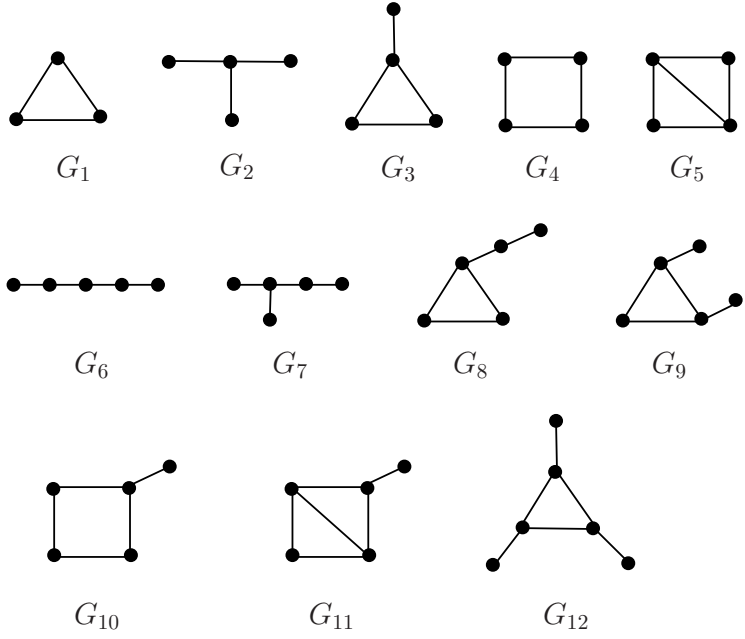}}
\end{center}
\caption{The twelve graphs in Theorem 7.}\label{Fig.1.}
\end{figure}

In order to prove Theorem \ref{thm7}, we need the lemma below.

\begin{lem}\label{lem3} If $G$ is a connected graph with order $n\geq 7$,
then $spvc(G)<n-3$.
\end{lem}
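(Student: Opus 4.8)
The plan is to bound $spvc(G)$ in terms of a spanning tree and its structure, and show that a graph on $n \geq 7$ vertices always admits a strong proper vertex-coloring with at most $n-4$ colors. The natural starting point is to take a BFS/spanning tree $T$ of $G$ rooted at a well-chosen vertex, and recall (as in the proof of Theorem \ref{thm1}) that coloring the levels of a BFS tree alternately with two colors yields a vertex-proper path between any two vertices; the obstruction to using this directly for \emph{geodesics} is that a BFS path in $T$ need not be a shortest path in $G$. So instead I would argue as follows. If $diam(G) \leq 2$ then $spvc(G) \leq 1 \leq n-4$ for $n \geq 7$ by Proposition \ref{pro2}, so we may assume $diam(G) \geq 3$. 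Pick a diametral path $P = v_0 v_1 \cdots v_d$ with $d = diam(G) \geq 3$. The key observation is that every geodesic between two vertices $u,w$ uses at most $d-1$ internal vertices, and—crucially—its internal vertices can be partitioned by their distance from $u$, so coloring \emph{by distance classes} from a cleverly chosen reference structure makes every geodesic automatically vertex-proper with only a few colors.

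Concretely, here is the scheme I would try. Fix a vertex $r$ and let $L_i = \{x : d(r,x) = i\}$ for $0 \leq i \leq e(r)$. Color each vertex $x$ with $d(r,x) \bmod 2$; this is a proper $2$-coloring along \emph{$r$-geodesics} but again not along arbitrary geodesics. To fix arbitrary geodesics, I would instead use the coloring $c(x) = d(r,x) \bmod 3$, or even better assign three colors cyclically, and observe that along any geodesic $Q = x_0 x_1 \cdots x_\ell$, consecutive vertices satisfy $|d(r,x_{i+1}) - d(r,x_i)| \leq 1$, but two \emph{consecutive internal} vertices that receive the same color must have $d(r,x_{i+1}) = d(r,x_i)$, i.e. $Q$ traverses two vertices on the same level. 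One then shows such a "flat step" can be avoided by rerouting, or that it can be tolerated using one extra color reserved for a small set. The cleanest version: I expect that \emph{three colors always suffice} for $spvc$ when $diam(G) \geq 3$—i.e. $spvc(G) \leq 3$—because among all geodesics between $u$ and $w$ one can always select one whose internal vertices form a monotone sequence of distance-levels from a fixed root, and then the mod-$2$ coloring of levels works, giving $spvc(G) \leq 2$. Since $n \geq 7$ forces $n - 4 \geq 3$, any bound of the form $spvc(G) \leq 3$ immediately yields $spvc(G) < n-3$.

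The main obstacle, and where I would spend most of the effort, is precisely the claim that one can always choose a geodesic with monotone distance-levels from a common root, or failing that, to directly produce an explicit coloring with a constant (or at worst $O(1)$, certainly $\leq n-4$) number of colors that makes \emph{every} geodesic vertex-proper. A fallback that definitely works: greedily build the coloring so that the internal vertices of one fixed geodesic between each "hard" pair are properly colored, but the total number of distinct colors is controlled—this is where the hypothesis $n \geq 7$ enters, guaranteeing enough slack to merge color classes down to $n-4$. I would structure the final write-up as: (i) dispose of $diam(G) \leq 2$; (ii) for $diam(G) \geq 3$, establish the level-based coloring lemma; (iii) verify that every geodesic is vertex-proper under it; (iv) count colors and conclude $spvc(G) \leq 2 < n-3$ for $n \geq 7$. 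The delicate point throughout is handling geodesics that are not $r$-geodesics, and I anticipate needing a short rerouting argument or an induction on $d(u,w)$ to close that gap.
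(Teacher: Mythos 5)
Your proposal contains a fatal gap: the central claim that $spvc(G)\leq 3$ (or even $\leq 2$) whenever $diam(G)\geq 3$ is false, so the whole strategy of getting a constant bound and then observing $3\leq n-4$ collapses. A counterexample is the corona $cor(K_a)$ of a complete graph (each vertex $v_i$ of $K_a$ given a pendant neighbour $v_i'$): the \emph{unique} geodesic between $v_i'$ and $v_j'$ is $v_i'v_iv_jv_j'$, whose two internal vertices $v_i,v_j$ are adjacent, so all $a$ vertices of the $K_a$ must receive pairwise distinct colors and $spvc(cor(K_a))=a=n/2$. This is exactly the construction the paper itself uses in Theorem \ref{thm8} to show $spvc$ is unbounded. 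It also kills your proposed mechanism: in $cor(K_a)$ the flat step $v_iv_j$ (two internal vertices at the same distance level from any root) sits on a unique geodesic, so no rerouting or ``monotone level sequence'' is available, and the mod-$2$ or mod-$3$ level coloring fails. Your fallback paragraph (greedily color one geodesic per hard pair and ``merge color classes down to $n-4$'') is not an argument; it is precisely the point where a proof would be needed.

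The paper closes the gap by a case analysis on the maximum degree $\Delta$ rather than on the diameter alone. When $\Delta\in\{n-1,n-2,n-3\}$ the graph is so dense that almost all pairs are at distance at most $2$, and explicit colorings with at most $1$, $2$, $3$ colors respectively (handling the one or two non-neighbours of a maximum-degree vertex by hand) give $spvc(G)\leq 3<n-3$ for $n\geq 7$. The essential missing ingredient in your approach appears in the remaining case $\Delta\leq n-4$: inequality $(3)$ gives $spvc(G)\leq\chi(G)$, and Brooks' theorem gives $\chi(G)\leq\Delta\leq n-4$ unless $G$ is an odd cycle or complete, both of which are trivial here. Note that $cor(K_a)$ falls into this last case, since $\Delta=a\leq n-4$ and $\chi=a$, which is why the lemma survives even though your constant bound does not. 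If you want to salvage your write-up, replace steps (ii)--(iv) with the bound $spvc(G)\leq\chi(G)$ plus Brooks' theorem for the sparse case, and keep explicit colorings only for $\Delta\geq n-3$.
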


\pf Let $\Delta$ be the maximum degree of $G$.

Case 1. $\Delta=n-1$. Then, we have $diam(G)\leq 2$. By
Proposition \ref{pro2}, it follows that $spvc(G)\leq 1<n-3$.

Case 2. $\Delta=n-2$. Let $v$ be a vertex with the maximum degree
$\Delta$ and $N(v)=\{v_1,v_2,\ldots,v_{n-2}\}$ denote its
neighborhood. Let $v'$ be the only vertex not adjacent to $v$ and
$N(v')$ denote its neighborhood. Color the vertices of $N(v')$ with
color 1 and all other vertices with color 2. Now, we will show that
for any two vertices $u$ and $w$ in $G$, there exists a
vertex-proper geodesic between them. If $d(u,w)\leq 2$, then there
must be a vertex-proper $u$-$w$ geodesic. Since $G$ is connected,
there exists a vertex $v_i$ $(i\in\{1,2,\ldots,n-2\})$ such that
$v'$ is adjacent to $v_i$. If $d(u,w)=3$ for $u=v',w\in N(v)$ (or
$u\in N(v),w=v'$), then they are connected by a vertex-proper
geodesic $uv_ivw$ (or $uvv_iw$). For the other cases, $d(u,w)\leq
2$. Therefore, $spvc(G)\leq 2<n-3$.

Case 3. $\Delta=n-3$. Let $v$ be a vertex with the maximum degree
$\Delta$ and $N(v)=\{v_1,v_2,\ldots,v_{n-3}\}$ denote its
neighborhood. Let $v'$ and $v''$ be the vertices not adjacent to
$v$. Moreover, let $N(v')$ and $N(v'')$ denote the neighborhood of
$v'$ and $v''$, respectively.

Subcase 3.1. $N(v')\cap N(v'')\neq\emptyset$. Then, there exists one
vertex $v_i$ $(i\in\{1,2,\ldots,n-3\})$ such that $v'$ and $v''$ are
adjacent to $v_i$. Color the vertex $v_i$ with color 1 and all the
other vertices with color 2. Next, we shall show that there exists a
vertex-proper geodesic between any two vertices $u$ and $w$ in $G$.
If $d(u,w)=3$ for $u=v' \ ($or $v'')$ and $w\in N(v)$, then they are
connected by a vertex-proper path $uv_ivw$. If $d(u,w)=3$ for $u\in
N(v)$ and $w=v' \ ($or $v'')$, then there is a vertex-proper path
$uvv_iw$ between them. For the other cases, $d(u,w)\leq 2$.
Therefore, $spvc(G)\leq 2<n-3$.

Subcase 3.2. $N(v')\cap N(v'')=\emptyset$. Color the vertices of
$N(v')$ with color 1, the vertices of $N(v'')$ with color 2 and all
the others with color 3. Firstly, suppose that $v'v''$ is an edge of
$G$. If $N(v'')=\{v'\}$, then there exists a vertex $v_i$
$(i\in\{1,2,\ldots,n-3\})$ such that $v'$ is adjacent to $v_i$. By a
similar discussion of Case 2, we obtain that $G-v''$ is strong
proper vertex-connected. Furthermore, the color of $v'$ which is the
unique neighbor of $v''$, is distinct from others. Thus, there
exists a vertex-proper geodesic between any two vertices in $G$.
Similarly, the case that $|N(v'')|\geq 2$ can be proved. Now, assume
that $v'$ is not adjacent to $v''$. If there is an edge between
$N(v')$ and $N(v'')$, say $v_1v_2$ with $v_1\in N(v')$ and $v_2\in
N(v'')$, then $v'v_1v_2v''$ is a vertex-proper $v'$-$v''$ geodesic;
otherwise, $v'v_1vv_2v''$ is a vertex-proper $v'$-$v''$ geodesic. It
can be verified for any other pair of vertices in $G$ that there
exists a vertex-proper geodesic between them. Hence, $spvc(G)\leq
3<n-3$.

Case 4. $\Delta\leq n-4$. By $(3)$, we have $spvc(G)\leq\chi(G)$. If
$G$ is an odd cycle, then $\chi(G)=3$, and so $spvc(G)\leq 3<n-3$;
otherwise, $\chi(G)\leq \Delta$ by Brook' theorem \cite{B}, and so
$spvc(G)\leq\Delta<n-3$.

The proof is complete. \qed

Now, we are ready to prove Theorem \ref{thm7}.

{\it Proof of Theorem 7. } By Proposition \ref{pro2}, we obtain that
$spvc(G)=n-3$ for $G=G_i$ $(1\leq i\leq 5)$. If $G=G_i$ $(6\leq
i\leq 11)$, then $diam(G)\geq 3$, and so $spvc(G)\geq 2$. A
$2$-coloring of the vertices of $G=G_i$ $(6\leq i\leq 11)$ is shown
in Figure 2 to make $G$ strong proper vertex-connected. Thus,
$spvc(G)=n-3$ for $G=G_i$ $(6\leq i\leq 11)$. For the graph
$G_{12}$, color the three non-leaves with distinct colors. Then, we
can see that there exists a vertex-proper geodesic for any two
vertices. Hence, $spvc(G_{12})\leq 3$. However, if one colors the
vertices of $G_{12}$ with two colors, there exist two non-leaves
having the same color and then we can not find a vertex-proper
geodesic between the corresponding pendant vertices.

\begin{figure}[h,t,b,p]
\begin{center}
\scalebox{0.8}[0.8]{\includegraphics{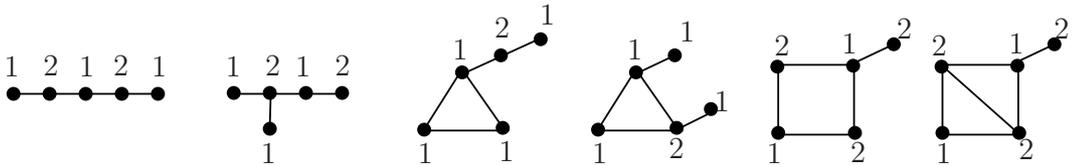}}
\end{center}
\caption{A $2$-coloring of vertices of $G=G_i$ $(6\leq i\leq 11)$.}\label{Fig.2.}
\end{figure}

It remains to verify the converse. Let $G$ be a connected graph of
order $n\geq 3$ such that $spvc(G)=n-3$. By Lemma \ref{lem3}, we
have that $n\in\{3,4,5,6\}$. Firstly, suppose $n=6$. Then,
$spvc(G)=3$. By Proposition \ref{pro2}, we get $diam(G)\geq 3$.
Moreover, $G$ contains a cycle; otherwise, $G$ is a tree and
$spvc(G)=2$. If $G\neq G_{12}$, it follows that $G$ contains a
subgraph isomorphic to one of the graphs $H_1,H_2,H_3,H_4,H_5,H_6$
in Figure 3, where a minimum vertex-coloring is also shown for each
graph to make it strong proper vertex-connected. Moreover, color the
remaining vertices of $G$ with any used color if there exist. It is
easy to check that $G$ is strong proper vertex-connected. Thus,
$spvc(G)\leq 2<3$, which is a contradiction. If $n=5$, then
$spvc(G)=2$. By Proposition \ref{pro2}, $diam(G)\geq 3$. However,
for $G\neq G_{i}$ $(6\leq i\leq 11)$, the diameter of $G$ is at most
two, which is a contradiction. Similarly, we deduce that $G=G_i$
$(2\leq i\leq 5)$ for $n=4$ and $G=G_1$ for $n=3$. \qed

\begin{figure}[h,t,b,p]
\begin{center}
\scalebox{0.8}[0.8]{\includegraphics{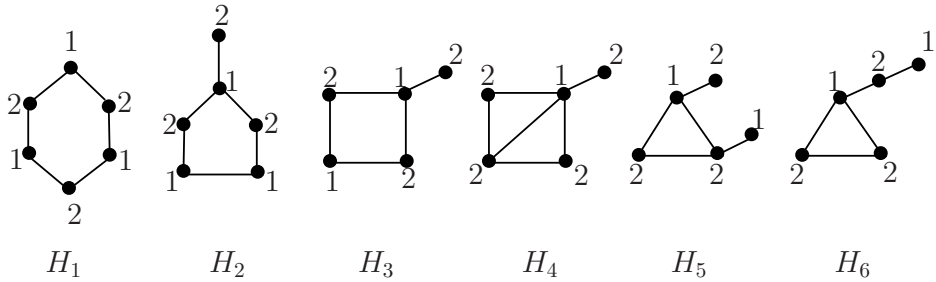}}
\end{center}
\caption{Subgraphs $H_1,H_2,\ldots,H_6$ in the proof of Theorem 7.}\label{Fig.3.}
\end{figure}

\subsection{Relationship of $spvc(G)$, $srvc(G)$ and $\chi(G)$}

By $(3)$, if $G$ is a nontrivial connected graph with $diam(G)\geq
3$ such that $spvc(G)=a$ and $srvc(G)=b$, then $2\leq a\leq b$.
Actually, this is the only restriction on the two parameters.

\begin{thm}\label{thm8} For every pair $a, b$ of integers
where $2\leq a\leq b$, there exists a connected graph $G$
such that $spvc(G)=a$ and $srvc(G)=b$.
\end{thm}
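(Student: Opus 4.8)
The plan is to exhibit, for each pair $2\le a\le b$, a single graph $G=G_{a,b}$ realizing both prescribed values. Let $\{q_1,\ldots,q_a\}$ induce a clique $K_a$; attach a pendant leaf $\ell_i$ to $q_i$ for every $i\in\{2,\ldots,a\}$; and attach to $q_1$ a pendant path $q_1t_1t_2\cdots t_p$ with $p:=b-a+1\geq 1$. Thus $G$ is a ``kite'' (a clique with a hanging path) with one extra leaf on every clique vertex other than the one carrying the path; it is connected and $d(\ell_i,t_p)=p+2\geq 3$, so $\mathrm{diam}(G)\geq 3$. The guiding intuition is that the clique-plus-leaves part forces many colours only in the \emph{proper} sense (there the relevant internal vertices of a geodesic are adjacent, so ``distinct'' and ``rainbow'' coincide), giving $spvc=a$, whereas the long leaf-to-path-tip geodesics, which must traverse $q_i$, $q_1$ and almost all of the path, force the clique colours and the path colours to be \emph{disjoint}, pushing $srvc$ up to $(a-1)+p=b$ rather than letting the two requirements overlap.

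First I would establish the lower bounds. For $i\ne j$ in $\{2,\ldots,a\}$ the unique $\ell_i$--$\ell_j$ geodesic is $\ell_iq_iq_j\ell_j$, and for $i\ge 2$ the unique $\ell_i$--$t_1$ geodesic is $\ell_iq_iq_1t_1$; in both the two internal vertices are adjacent, so in any strong proper (hence any strong rainbow) vertex-colouring the vertices $q_1,\ldots,q_a$ get pairwise distinct colours, giving $spvc(G)\ge a$. For $srvc$, the unique $q_2$--$t_p$ geodesic $q_2q_1t_1\cdots t_p$ forces the $p$ colours of $q_1,t_1,\ldots,t_{p-1}$ to be distinct; and for each $i\ge 2$ the unique $\ell_i$--$t_p$ geodesic $\ell_iq_iq_1t_1\cdots t_p$ has $p+1$ internal vertices $q_i,q_1,t_1,\ldots,t_{p-1}$, so in a strong rainbow colouring $c(q_i)$ avoids those $p$ colours; since $c(q_2),\ldots,c(q_a)$ are also pairwise distinct, at least $p+(a-1)=b$ colours are used, i.e.\ $srvc(G)\ge b$.

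It then remains to give matching colourings. For $spvc(G)\le a$: colour $q_i$ by $i$, colour $t_1,t_2,t_3,\ldots$ alternately $2,1,2,\ldots$, and colour every leaf by $1$; a short case check over the possible vertex pairs (leaf--leaf, leaf--$q_j$, leaf--$t_k$, $q_i$--$t_k$, $t_k$--$t_{k'}$, and all pairs at distance $\le 2$) shows each pair has a vertex-proper geodesic. For $srvc(G)\le b$: colour $q_1$ by $1$, each $t_k$ by $k+1$ for $1\le k\le p$, each $q_i$ $(i\ge 2)$ by $p-1+i$, and every leaf by $1$; these are the $b$ colours $1,\ldots,p+a-1$, and the same case analysis (now checking that the forced internal vertices are pairwise distinct, using $p-1+i\ge p+1>k$) goes through. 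Hence $spvc(G)=a$ and $srvc(G)=b$. As sanity checks, $a=2$ degenerates to $G=P_{b+2}$, consistent with Observation~\ref{obser1} and Lemma~\ref{lem2}, while $a=b$ gives $G=K_a$ with a pendant at every vertex.

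The genuinely substantive step is the lower bound $srvc(G)\ge b$: the decorations were chosen precisely so that one long geodesic (from a leaf $\ell_i$ to the path-tip $t_p$) is \emph{unique} and simultaneously sees a clique vertex $q_i$, the hub $q_1$, and essentially the whole path, which is what forces the ``clique palette'' $\{c(q_2),\ldots,c(q_a)\}$ and the ``path palette'' $\{c(q_1),c(t_1),\ldots,c(t_{p-1})\}$ to be disjoint, so the counts add. Everything afterwards---verifying uniqueness of the few relevant geodesics and checking the two explicit colourings against all pair-types---is routine bookkeeping.
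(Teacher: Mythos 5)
Your construction (a clique with pendant leaves plus a pendant path whose length encodes $b-a$) is correct, and it is essentially the paper's own construction, which takes the corona $cor(K_a)$ and attaches $P_{b-a}$ to one pendant vertex; you merely attach the tail directly to a clique vertex instead of to its leaf. The lower- and upper-bound arguments match the paper's in both substance and detail.
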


\pf Let $H$ be the corona $cor(K_a)$ of the complete graph
$K_a$ with $V(K_a)=\{v_1,v_2,\ldots,v_a\}$ and
$V(H\backslash{K_a})=\{v_1^{'},v_2^{'},\ldots,v_a^{'}\}$,
where $v_i^{'}$ is the corresponding pendant vertex of $v_i$
for $1\leq{i}\leq{a}$. Let $F=P_{b-a}$ with $V(F)=\{w_1,w_2,\ldots,w_{b-a}\}$.
Now let $G$ be the graph obtained from $H$ and $F$ by adding the edge $v_a^{'}w_1$.

Firstly, we show that $spvc(G)=a$. Define a vertex-coloring of $G$
as follows: Assign the color $j$ to $v_j$ for $1\leq{j}\leq{a}$ and
the color $1$ to $v_a^{'}$. For $1\leq{k}\leq{b-a}$, if $k$ is even,
assign the color $1$ to $w_k$; otherwise, assign the color $2$ to
$w_k$. We can see that every two vertices $x$ and $y$ are connected
by a vertex-proper $x$-$y$ geodesic. Hence, $spvc(G)\leq{a}$. If one
colors the vertices of $G$ with fewer than $a$ colors, then there
must be two vertices $v_s$ and $v_t$, where $1\leq s,t\leq a$, such
that they have the same color. However, we can not find a
vertex-proper geodesic between $v_s^{'}$ and $v_t^{'}$. Thus,
$spvc(G)=a$.

Next, we prove that $srvc(G)=b$. Define a vertex-coloring of $G$ by
assigning $(1)$ the color $j$ to $v_j$ for $1\leq{j}\leq{a}$, $(2)$
the color $a+1$ to $v_a^{'}$ and $(3)$ the color $a+1+k$ to $w_k$
for $1\leq{k}\leq{b-a-1}$. Since all non-leaves of $G$ have distinct
colors, $G$ is strong rainbow vertex-connected. Hence,
$srvc(G)\leq{b}$. If one colors the vertices of $G$ with fewer than
$b$ colors, then there must be two vertices of
$\{v_1,v_2,\ldots,v_a,v_a^{'},w_1,w_2,\ldots,w_{b-a-1}\}$ having the
same color. Furthermore, the colors of
$\{v_a^{'},w_1,w_2,\ldots,w_{b-a-1}\}$ must be distinct since there
is only one path between $v_a$ and $w_{b-a}$. If the colors of $v_i$
and $v_j$ $(1\leq{i,j}\leq{a})$ are the same, then there does not
exist a vertex-rainbow geodesic between $v_i^{'}$ and $v_j^{'}$. If
the colors of $v_i$ and $w_k$, where $1\leq{i}\leq{a-1}$ and
$1\leq{k}\leq{b-a-1}$, are the same, then we can not find a
vertex-rainbow geodesic between $v_i^{'}$ and $w_{k+1}$. If the
colors of $v_a$ and $v_k$ $(1\leq{k}\leq{b-a-1})$ are the same, then
there does not exist a vertex-rainbow geodesic between $v_1$ and
$w_{k+1}$. Thus $srvc(G)=b$. \qed

We saw in $(3)$ that if $G$ is a nontrivial connected graph with
$diam(G)\geq 3$ which is not an odd cycle such that
$spvc(G)=a,\chi(G)=b$ and $\Delta(G)=c$, then $2\leq a\leq b\leq c$.
In fact, this is the only restriction on the three parameters.

\begin{thm}\label{thm9} For every triple $a, b, c$ of integers
where $2\leq a\leq b\leq c$, there exists a connected graph $G$ such
that $spvc(G)=a, \chi(G)=b$ and $\Delta(G)=c$.
\end{thm}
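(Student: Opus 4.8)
The plan is to exhibit a single graph $G$ realizing all three prescribed values at once, in the spirit of the construction used for Theorem~\ref{thm8}. I would start from a complete graph $K_b$ with vertex set $\{u_1,u_2,\ldots,u_b\}$ and attach pendant (degree-one) vertices to it: a block of $c-b+1$ pendants at the single vertex $u_1$ (note $c-b+1\ge 1$ since $c\ge b$), and exactly one pendant at each of $u_2,u_3,\ldots,u_a$ (this is where $a\ge 2$ is used). Let $G$ be the resulting graph; when $a=b=c$ it is precisely the corona $cor(K_b)$. Two elementary facts will drive the entire argument. First, a pendant vertex can never be an internal vertex of any path, so pendant vertices never affect vertex-properness and never belong to a clique; second, attaching pendant vertices to a graph does not change its chromatic number. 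Consequently the clique $K_b$ will pin down $\chi$, the $a$ vertices $u_1,\ldots,u_a$ that carry pendants will pin down $spvc$, and the pendant block at $u_1$ will be used purely to inflate $\Delta$ to the target value without disturbing the other two.

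I would first dispose of the two easy parameters. Since $K_b\subseteq G$ we have $\chi(G)\ge b$, and a proper $b$-colouring of $K_b$ extends to $G$ by giving each pendant a colour different from its unique neighbour (possible as $b\ge 2$), so $\chi(G)=b$. For the maximum degree, $\deg_G(u_1)=(b-1)+(c-b+1)=c$, while $\deg_G(u_i)=b$ for $2\le i\le a$, $\deg_G(u_i)=b-1$ for $a+1\le i\le b$, and every pendant has degree $1$; since $c\ge b$ this gives $\Delta(G)=c$. Finally $diam(G)=3$: two pendants with distinct bases among $u_1,\ldots,u_a$ lie at distance $3$, and no pair of vertices of $G$ is farther apart.

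Next I would prove $spvc(G)=a$. For the lower bound, fix $1\le i<j\le a$, let $p,q$ be pendants of $u_i,u_j$, and note that since $u_i$ and $u_j$ are adjacent, $d(p,q)=3$ and the only $p$-$q$ geodesic is $p\,u_i\,u_j\,q$ (any $p$-$q$ geodesic must begin with $p\,u_i$ and end with a vertex adjacent to $q$, and the only neighbour of $q$ is $u_j$). Hence in any colouring making $G$ strong proper vertex-connected, $u_i$ and $u_j$ must get distinct colours, so $u_1,\ldots,u_a$ use at least $a$ colours and $spvc(G)\ge a$. For the upper bound, colour $u_i$ with colour $i$ for $1\le i\le a$ and every other vertex of $G$ with colour $1$. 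Since $diam(G)=3$, every geodesic of length at most $2$ has at most one internal vertex and is automatically vertex-proper, while the geodesics of length $3$ are exactly the paths $p\,u_i\,u_j\,q$ above with $i\ne j$ in $\{1,\ldots,a\}$, whose two internal vertices receive the distinct colours $i$ and $j$; thus every geodesic is vertex-proper and $spvc(G)\le a$. Combining with the previous paragraph proves the theorem.

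In my estimation the construction is the entire content here: once it is written down, each of the three verifications is a short case check that leans on $diam(G)=3$. The step I expect to require the most care is the upper bound $spvc(G)\le a$ — precisely the claim that the only geodesics containing two adjacent internal vertices are the pendant-to-pendant geodesics, so that all internal vertices of all geodesics lie in $\{u_1,\ldots,u_a\}$, which the colouring makes rainbow (hence in particular properly coloured along every geodesic).
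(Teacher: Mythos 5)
Your construction (a $K_b$ with $c-b+1$ pendants at one vertex and one pendant at each of $a-1$ further vertices) and all three verifications coincide with the paper's own proof of Theorem~\ref{thm9}; your write-up is just slightly more explicit about $diam(G)=3$ and about why the pendant-to-pendant geodesics are the only length-$3$ geodesics. The argument is correct and essentially identical to the paper's.
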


\pf Let $H=K_b$ with $V(K_b)=\{v_1,v_2,\ldots,v_b\}$. Then, add
$c-b+1$ pendant vertices, denoted by
$\{v_1^{1},v_1^{2},\ldots,v_1^{c-b+1}\}$, to $v_1$, and a pendant
vertex $v_i^{1}$ to $v_i$ for $2\leq{i}\leq{a}$. Write $G$ as the
resulting graph. It is easy to see that the maximum degree of $G$ is
$c$, i.e., $\Delta(G)=c$.

In the following, we first show that $spvc(G)=a$. Define a
vertex-coloring of $G$ by assigning the color $j$ to $v_j$ for
$1\leq{j}\leq{a}$. It is easy to check that every two vertices $x$
and $y$ are connected by a vertex-proper $x$-$y$ geodesic. Hence,
$spvc(G)\leq{a}$. If one colors the vertices of $G$ with fewer than
$a$ colors, then there must be two vertices $v_j$ and $v_k$
$(1\leq{j,k}\leq{a})$ such that they have the same color. However,
we can not find a vertex-proper geodesic between $v_j^{1}$ and
$v_k^{1}$. Thus $spvc(G)=a$.

Next, we show that $\chi(G)=b$. Define a vertex-coloring of $G$ by
assigning $(1)$ the color $j$ to $v_j$ ($1\leq{j}\leq{b}$), $(2)$
the color $j-1$ to $v_j^{1}$ ($2\leq{j}\leq{a}$) and $(3)$ the color
$2$ to $v_1^{k}$ ($1\leq{k}\leq{c-b+1}$). We can see that any two
adjacent vertices have distinct colors. Hence, $\chi(G)\leq{b}$. If
one colors the vertices of $G$ with fewer than $b$ colors, then
there must exist two adjacent vertices $v_j$ and $v_k$
$(1\leq{j,k}\leq{b})$ such that they have the same color. Thus
$\chi(G)=b$. \qed

\end{document}